\numberwithin{equation}{section}
\theoremstyle{plain}
\newtheorem{theorem}{Theorem}[section]
\newtheorem{lemma}[theorem]{Lemma}
\newtheorem{corollary}[theorem]{Corollary}
\theoremstyle{definition}
\theoremstyle{remark}
\newtheorem{remark}[theorem]{Remark}
\renewcommand{\Re}{\operatorname{Re}}
\renewcommand{\Im}{\operatorname{Im}}
\newcommand{\vol}{\operatorname{vol}}
\newcommand{\supp}{\operatorname{supp}}
\newcommand{\sym}{\operatorname{sym}}
\newcommand{\GL}{\operatorname{GL}}
\newcommand{\SL}{\operatorname{SL}}
\renewcommand{\mod}{\operatorname{mod}\; }
\newcommand{\dd}{\mathrm{d}}
\def\@tocline#1#2#3#4#5#6#7{\relax
  \ifnum #1>\c@tocdepth 
  \else
    \par \addpenalty\@secpenalty\addvspace{#2}%
    \begingroup \hyphenpenalty\@M
    \@ifempty{#4}{%
      \@tempdima\csname r@tocindent\number#1\endcsname\relax
    }{%
      \@tempdima#4\relax
    }%
    \parindent\z@ \leftskip#3\relax \advance\leftskip\@tempdima\relax
    \rightskip\@pnumwidth plus4em \parfillskip-\@pnumwidth
    #5\leavevmode\hskip-\@tempdima
      \ifcase #1
       \or\or \hskip 1em \or \hskip 2em \else \hskip 3em \fi%
      #6\nobreak\relax
    \hfill\hbox to\@pnumwidth{\@tocpagenum{#7}}\par
    \nobreak
    \endgroup
  \fi}
\begin{document}

\title
{Mixed moments of Hecke eigenforms and $L$-functions}

\author{Bingrong Huang}

\address{Data Science Institute \& State Key Laboratory of Cryptography and Digital Economy Security \\ Shandong University \\ Jinan \\ Shandong 250100 \\China} 

\email{brhuang@sdu.edu.cn}

\dedicatory{Dedicated to Ze\'ev Rudnick on the occasion of his 64th birthday.} 

\date{\today}

\begin{abstract}
  In this paper, we establish estimates for the expectation and variance of the mixed $(2,2)$-moment of two Hecke eigenforms of distinct weights. Our results yield applications to triple product $L$-functions. 
  The proofs are based on moments of $L$-functions.
\end{abstract}

\keywords{Mixed moments, variance, Hecke eigenforms, $L$-functions}

\subjclass[2010]{11F11, 11F66}

\thanks{This work was supported by  the National Key R\&D Program of China (No. 2021YFA1000700) and 
the Scientific Research Innovation Capability Support Project for Young Faculty (No. SRICSPYF-ZY2025158).}

\maketitle

\section{Introduction} \label{sec:Intr}

The study of the value distribution of automorphic forms is a central problem in analytic number theory and arithmetic quantum chaos.
Let $\mathbb{H}=\{z=x+iy:x\in\mathbb{R},\ y>0\}$ denote the upper half plane, and let $\dd\mu z= \dd x\dd y/y^2$ be the hyperbolic measure.
Let  $\Gamma=\SL_2(\mathbb{Z})$ be the modular group.
Let $k\geq12$ be an even integer,
and let $H_k$ be a Hecke basis for the space $S_k$ of all holomorphic cusp forms of weight $k$ for $\Gamma$.
For $f\in H_k$, we normalize it so that 
$\langle f,f\rangle_k := \int_{\Gamma\backslash \mathbb{H}}  y^k |f(z)|^2 \dd\mu z
=\vol(\Gamma\backslash \mathbb{H})=\pi/3$.

In the $L^2$-setting, we have the holomorphic analog of
the quantum unique ergodicity (hQUE) conjecture of Rudnick and Sarnak \cite{RS}.
In 2010, Holowinsky and Soundararajan \cite{HS} proved hQUE,
 confirming the equidistribution of the mass of $f$.
Specifically, they proved that
\[
  \frac{1}{\vol(\Omega)} \int_{\Omega} y^{k} |f(z)|^{2}  \dd \mu z
    = 1 + o(1),
    \quad \textrm{as $k\rightarrow \infty$,}
\]
for any fixed compact domain $\Omega$ of $\Gamma\backslash\mathbb{H}$ with hyperbolic measure zero boundary $\partial\Omega$.

In 2013, Blomer, Khan, and Young \cite{BlomerKhanYoung2013distribution} studied the $L^4$-norm of $f$, proving that
\begin{equation}\label{eqn:L^4}
  \int_{\Gamma\backslash \mathbb{H}} y^{2k}|f(z)|^{4}  \dd \mu z
  = O( k^{1/3+\varepsilon}),
\end{equation}
for any $\varepsilon>0$.
They conjectured that 
\[
  \frac{1}{\vol(\Gamma\backslash \mathbb{H})}
  \int_{\Gamma\backslash \mathbb{H}} y^{2k}|f(z)|^{4} \dd \mu z=2+o(1), 
  \quad \text{as $k\rightarrow\infty$.}
\]
Assuming the generalized Riemann Hypothesis (GRH), Zenz \cite{Zenz} recently proved
\[
  \int_{\Gamma\backslash \mathbb{H}} y^{2k}|f(z)|^{4}  \dd \mu z = O(1).
\]
Blomer, Khan, and Young \cite{BlomerKhanYoung2013distribution} also showed that for $p>6$, we have
  \[  \int_{\Gamma\backslash \mathbb{H}} |y^{k/2}f(z)|^p \dd\mu z   \gg k^{p/4-3/2-\varepsilon}, \]
which extended the sup-norm result of Xia \cite{Xia}, namely $\max_{z\in\mathbb{H}} |y^{k/2}f(z)| = k^{1/4+o(1)}$.

In general, it is natural to conjecture \cite{Huang2024joint} that
for any $a\in \mathbb{N}$, we have
  \[
    \frac{1}{\vol(\Omega)} \int_{\Omega} y^{ak} |f(z)|^{2a}  \dd \mu z = a! + o(1),
    \quad  \textrm{  as $k\rightarrow \infty$,}
  \]
for any fixed compact domain $\Omega \subset \Gamma\backslash\mathbb{H}$ as above. 
See, for instance, \cite[\S4.4]{HLWY} for the expected value of the $L^p$-norms of random cusp forms.  

\medskip

Recently, we studied the joint distribution of Hecke eigenforms in the large-weight limit.
Let $f\in H_k$ and $g\in H_\ell$ and assume $\langle f,g\rangle=0$ if $k=\ell$.
Denote $F_k(z):=y^{k/2}f(z)$ and $G_\ell(z):=y^{\ell/2}g(z)$.
We know that $|F_k(z)|$ and $|G_\ell(z)|$ are $\Gamma$ invariant.
We will focus on the joint mass distribution of $|F_k(z)|$ and $|G_\ell(z)|$, 
especially the mixed $(2,2)$-moment
\[
  \langle |F_k|^2, |G_\ell|^2 \rangle
  := \int_{\Gamma\backslash\mathbb{H}}    |F_k(z)|^2 |G_\ell(z)|^2 \dd \mu z .
\]
In \cite{Huang2024joint}, we  conjectured that orthogonal Hecke eigenforms are statistically independent.
In particular, we expected that
\begin{equation}\label{eqn:mixed22}
  \frac{1}{\vol(\Gamma\backslash\mathbb{H})}  \langle |F_k|^2, |G_\ell|^2 \rangle = 1 +o(1),
\end{equation}
as $\max(k,\ell)\rightarrow\infty$.
We established \eqref{eqn:mixed22} under the assumptions of the generalized Riemann Hypothesis (GRH) and the generalized Ramanujan conjecture (GRC).

\medskip

In the present paper, we seek unconditional results in this direction.
We consider certain expectation and variance of the mixed $(2,2)$-moment of two Hecke eigenforms of distinct  weights. 
As a consequence, we derive asymptotic formulas for a first moment of triple product $L$-functions, leading to new nonvanishing results for these $L$-functions.

\subsection{The variance}


We  prove that the variance of the mixed $(2,2)$-moment of two Hecke eigenforms vanishes asymptotically.
Our main result on the variance is the following theorem.

\begin{theorem}\label{thm:var}
  Let $K\geq12$ be sufficiently large and  $\ell\geq12$  an even integer.   Assume $\ell \leq K^{\delta_2-\varepsilon}$, with $\delta_2=3/4$. Then for every $g\in H_\ell$,
  \[
    \frac{1}{K^2} \sum_{K<k\leq2K}
    \sum_{f\in H_k}  \Big|\frac{1}{\vol(\Gamma\backslash\mathbb{H})}\langle |F_k|^2, |G_\ell|^2 \rangle-1 \Big|^2
    \ll \ell^{4/3} K^{-1+\varepsilon}.
  \]
\end{theorem}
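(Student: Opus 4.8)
The plan is to expand the normalized mixed moment spectrally and then, after squaring, to exploit the orthogonality of the Hecke eigenforms $f$ averaged over the weight $k$. The starting observation is that $|G_\ell|^2=y^\ell|g|^2$ has total mass $\langle|G_\ell|^2,1\rangle=\langle g,g\rangle_\ell=\vol(\Gamma\backslash\mathbb{H})$, so the constant term in the spectral expansion of $|F_k|^2$ paired against $|G_\ell|^2$ produces precisely the $1$ that is subtracted. Letting $\{u_j\}$ be an orthonormal basis of Hecke--Maass cusp forms with spectral parameters $t_j$ and $E(\cdot,1/2+it)$ the Eisenstein series, Parseval gives
\[
\frac{1}{\vol(\Gamma\backslash\mathbb{H})}\langle|F_k|^2,|G_\ell|^2\rangle-1
=\frac{1}{\vol(\Gamma\backslash\mathbb{H})}\Big(\sum_j\langle|F_k|^2,u_j\rangle\langle u_j,|G_\ell|^2\rangle
+\frac{1}{4\pi}\int_{\mathbb{R}}\langle|F_k|^2,E_t\rangle\langle E_t,|G_\ell|^2\rangle\,\dd t\Big),
\]
where $E_t:=E(\cdot,1/2+it)$. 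I would treat the discrete and continuous parts in the same way and describe the discrete one.

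First I would apply Watson's triple product formula to each spectral coefficient, writing $|\langle|F_k|^2,u_j\rangle|^2$, up to explicit archimedean factors and the harmonic normalization coming from $L(1,\sym^2 f)$, in terms of $L(1/2,\sym^2 f\times u_j)\,L(1/2,u_j)$, and likewise $|\langle|G_\ell|^2,u_j\rangle|^2$ in terms of $L(1/2,\sym^2 g\times u_j)\,L(1/2,u_j)$. Two features of the archimedean factors are decisive: the one attached to $g$ localizes the spectrum to $t_j\ll\ell^{1+\varepsilon}$, and the one attached to $f$ contributes a factor of size $\asymp 1/k\asymp 1/K$ throughout the bulk $t_j\ll k$, which covers the whole localized range since $\ell\le K^{3/4-\varepsilon}$.

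Next I would square the displayed identity, sum over $f\in H_k$ and $K<k\le 2K$, and divide by $K^2$. Expanding the square produces a double sum over $j,j'$ of $\frac{1}{K^2}\sum_k\sum_{f\in H_k}\langle|F_k|^2,u_j\rangle\overline{\langle|F_k|^2,u_{j'}\rangle}$, weighted by the coefficients attached to $g$. Unfolding each coefficient into the Hecke data of $f$ and applying the Petersson formula in the weight aspect yields a diagonal term ($j=j'$) together with off-diagonal Kloosterman contributions. Because these central values are nonnegative, the diagonal collapses to the first moment $\frac{1}{K^2}\sum_k\sum_{f\in H_k}L(1/2,\sym^2 f\times u_j)$ over the family; evaluated by its approximate functional equation this is Lindelöf on average, and combined with the archimedean weight of $f$ it makes the diagonal contribution of each fixed $j$ at most $\ll(1+|t_j|)K^{-1+\varepsilon}$. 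Summing against the $g$-coefficients over the localized range and using spectral Parseval then gives
\[
\ll\frac{1}{K^{1-\varepsilon}}\sum_{t_j\ll\ell^{1+\varepsilon}}(1+|t_j|)\,|\langle u_j,|G_\ell|^2\rangle|^2
\ll\frac{\ell^{1+\varepsilon}}{K^{1-\varepsilon}}\sum_j|\langle u_j,|G_\ell|^2\rangle|^2
\ll\frac{\ell^{1+\varepsilon}}{K^{1-\varepsilon}}\int_{\Gamma\backslash\mathbb{H}}y^{2\ell}|g|^4\,\dd\mu z.
\]
Inserting the Blomer--Khan--Young bound \eqref{eqn:L^4}, namely $\int_{\Gamma\backslash\mathbb{H}}y^{2\ell}|g|^4\,\dd\mu z\ll\ell^{1/3+\varepsilon}$, produces exactly $\ell^{4/3}K^{-1+\varepsilon}$, so that the two powers of $\ell$ factor as $\ell^{1}\cdot\ell^{1/3}$, the first from the archimedean weight $(1+|t_j|)\ll\ell$ and the second from the $L^4$-norm of $g$. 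The continuous spectrum is handled identically, with $L(1/2,\sym^2 g\times u_j)$ replaced by $|L(1/2+it,\sym^2 g)|^2/|\zeta(1+2it)|^2$ and the discrete Parseval replaced by its Eisenstein analogue.

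I expect the genuine obstacle to be the off-diagonal: controlling the Kloosterman terms from the Petersson formula uniformly in $j,j'$ and in the archimedean parameters while $\ell$ is allowed to grow as large as $K^{3/4-\varepsilon}$. Keeping the resulting Bessel and archimedean transforms under control in the transition region $t_j\asymp\ell$, and confirming that both the off-diagonal and the length of the moment's functional equation stay strictly below the diagonal size $\ell^{4/3}K^{-1}$, is where the real work lies; the hypothesis $\ell\le K^{3/4-\varepsilon}$ should be precisely the threshold that makes the diagonal dominate.
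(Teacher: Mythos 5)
Your toolkit (spectral decomposition of $\langle|F_k|^2,|G_\ell|^2\rangle$ with the constant term producing the $1$, Watson's formula, Lindel\"of-on-average for the first moment of $L(1/2,\sym^2 f\times u_j)$ over the family, and the Blomer--Khan--Young bound \eqref{eqn:L^4} supplying the factor $\ell^{1/3}$) is essentially the paper's toolkit, and your diagonal computation parallels the paper's Theorems \ref{thm:1moment} and \ref{thm:mixedmoment}. The genuine gap is the off-diagonal, which you explicitly postpone as ``where the real work lies'': it is not a hard-but-routine remainder, it is unworkable as you describe it. Watson's formula controls only the modulus squared $|\langle|F_k|^2,u_j\rangle|^2$ as a ratio of $L$-values; the period itself equals $\epsilon_{f,j}$ times a square root of $L(1/2,u_j)L(1/2,\sym^2 f\times u_j)$, where the sign $\epsilon_{f,j}$ depends on $f$ and $j$ in no controllable way. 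Hence a cross term $\langle|F_k|^2,u_j\rangle\overline{\langle|F_k|^2,u_{j'}\rangle}$ with $j\ne j'$ is a product of square roots of central values times an uncontrolled sign; it admits no Dirichlet-series expansion in the Hecke eigenvalues $\lambda_f(n)$, so ``unfolding into the Hecke data of $f$ and applying the Petersson formula'' is simply not available (Petersson applies to sums of $\lambda_f(m)\lambda_f(n)$, and square roots of $L$-values are not of this shape). Your expansion of the square is therefore a dead end, not a program.

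The paper avoids the off-diagonal altogether. Since only an upper bound is needed, it starts from the bound of \cite{Huang2024joint}, in which the spectral expansion is already replaced by a sum of absolute values of spectral coefficients, and then applies the Cauchy--Schwarz inequality \emph{to the spectral sum}, so that the square factors as $\big(\sum_{\phi} L(1/2,\phi)L(1/2,\sym^2 f\times\phi)\big)\cdot\big(\sum_{\phi'} L(1/2,\phi')L(1/2,\sym^2 g\times\phi')e^{-t_{\phi'}^2/\ell}\big)$, a product of full, nonnegative central values with no square roots and no cross terms; Theorems \ref{thm:1moment}, \ref{thm:2moment}, \ref{thm:mixedmoment} (the last deduced from \eqref{eqn:L^4}) and the spectral large sieve then give $\ell^{4/3}K^{-1+\varepsilon}$. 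A second, quantitative flaw in your write-up: the archimedean weight attached to $g$ localizes the spectrum to $t_j\ll\ell^{1/2+\varepsilon}$, not $t_j\ll\ell^{1+\varepsilon}$, because $\Gamma(\ell-1/2+it)/\Gamma(\ell)\asymp\ell^{-1/2}\exp(-t^2/(2\ell))$. This is not cosmetic: your diagonal invokes the first-moment bound uniformly over the localized range, and the available estimate (Theorem \ref{thm:1moment}) holds only for $t_\phi\le K^{1/2-\varepsilon}$; in your range $t_j$ could reach $\ell^{1+\varepsilon}\approx K^{3/4}$ when $\ell$ is near $K^{3/4-\varepsilon}$, which is beyond what is proven, whereas the correct localization $\ell^{1/2+\varepsilon}\le K^{3/8+\varepsilon}$ fits comfortably inside it.
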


\begin{remark}
  The theorem can be proved for some small $\delta_2>0$ without much difficulty. 
  The admissibility of $\delta_2=3/4$ relies on the $L^4$-norm bound \eqref{eqn:L^4} for $g\in H_\ell$; see Theorem \ref{thm:mixedmoment} below.
\end{remark}

For even integer $k\geq12$, we have $|H_k|=k/12+O(1)$.
Theorem \ref{thm:var} implies that \eqref{eqn:mixed22} holds for all but $O(K^{2-\varepsilon})$ forms $f\in \cup_{K<k\leq 2K}H_k$, provided $g\in H_\ell$ with $\ell \leq K^{\delta_2-\varepsilon}$.
By the Cauchy--Schwarz inequality, 
we deduce the following asymptotic formula for the expectation. 
\begin{corollary}
  Let $\ell\geq12$ be an even integer. Let $K\geq12$ be sufficiently large.  Assume $\ell \leq K^{\delta_1-\varepsilon}$, with $\delta_1=3/4$. Then for every $g\in H_\ell$,
  \[
    \frac{2}{K}\sum_{\substack{K<k\leq2K \\ k\equiv 0 \bmod 2}} \frac{1}{|H_k|}\sum_{f\in H_k}
     \frac{1}{\vol(\Gamma\backslash\mathbb{H})} \langle |F_k|^2, |G_\ell|^2 \rangle
    = 1 +  O(\ell^{2/3} K^{-1/2+\varepsilon}).
  \]
\end{corollary}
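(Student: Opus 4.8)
The plan is to deduce this directly from Theorem \ref{thm:var} via the Cauchy--Schwarz inequality, since the expectation is a weighted average of precisely the quantities whose mean-square is controlled there. First I would abbreviate, for each pair $(k,f)$ with $k$ even, $K<k\le 2K$ and $f\in H_k$,
\[
  a_{k,f} := \frac{1}{\vol(\Gamma\backslash\mathbb{H})}\langle |F_k|^2,|G_\ell|^2\rangle - 1 ,
\]
so that the left-hand side of the corollary equals
\[
  \frac{2}{K}\sum_{\substack{K<k\le 2K\\ k\equiv0\bmod 2}} \frac{1}{|H_k|}\sum_{f\in H_k}\bigl(1+a_{k,f}\bigr).
\]
Since $S_k=\{0\}$ for odd $k$, the outer sum effectively runs over even weights, matching the range in Theorem \ref{thm:var}. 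Separating off the constant term: the number of even $k\in(K,2K]$ is $K/2+O(1)$ and $\frac{1}{|H_k|}\sum_{f\in H_k}1=1$, so the constant part contributes $1+O(1/K)$, which is the main term.

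Next I would bound the fluctuation $\frac{2}{K}\sum_{k} \frac{1}{|H_k|}\sum_{f\in H_k} a_{k,f}$. Applying Cauchy--Schwarz over all pairs $(k,f)$, with the factor $1/|H_k|$ split off, gives
\[
  \sum_{k}\frac{1}{|H_k|}\sum_{f\in H_k}|a_{k,f}|
  \le \Bigl(\sum_k\sum_{f\in H_k} \frac{1}{|H_k|^2}\Bigr)^{1/2}\Bigl(\sum_k\sum_{f\in H_k}|a_{k,f}|^2\Bigr)^{1/2}.
\]
Using $|H_k|=k/12+O(1)\asymp K$, the inner sum over $f$ in the first factor produces $\sum_k |H_k|^{-1}\ll 1$. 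For the second factor, Theorem \ref{thm:var} yields $\sum_k\sum_{f\in H_k}|a_{k,f}|^2 \ll K^2\cdot \ell^{4/3}K^{-1+\varepsilon}=\ell^{4/3}K^{1+\varepsilon}$; here the hypothesis $\ell\le K^{\delta_1-\varepsilon}$ with $\delta_1=3/4$ is exactly the admissibility condition $\delta_2=3/4$ required to invoke that theorem. Hence the displayed sum is $\ll \ell^{2/3}K^{1/2+\varepsilon}$, and multiplying by $2/K$ bounds the fluctuation by $\ell^{2/3}K^{-1/2+\varepsilon}$. Combining this with the main term $1+O(1/K)$, and noting that $K^{-1}\ll \ell^{2/3}K^{-1/2+\varepsilon}$ for every $\ell\ge1$, produces the stated asymptotic.

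Since Theorem \ref{thm:var} carries all of the analytic weight, I do not expect a genuine obstacle here; the argument is a short deduction and the only care needed is bookkeeping. The points to watch are that the weighting $1/|H_k|$ is distributed correctly in the Cauchy--Schwarz step, so that the second factor reproduces \emph{exactly} the mean-square of Theorem \ref{thm:var} stripped of its $1/K^2$ normalization, and that the square-root-of-variance error $\ell^{2/3}K^{-1/2+\varepsilon}$ indeed dominates the $O(1/K)$ counting error. It is worth emphasizing that this argument controls only the \emph{signed} average, exploiting no cancellation among the $a_{k,f}$; the bound is driven purely by the smallness of the variance, which is why the saving is the square root of the $K^{-1}$ appearing in Theorem \ref{thm:var}.
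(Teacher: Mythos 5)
Your proposal is correct and follows essentially the same route as the paper, which obtains this corollary from Theorem \ref{thm:var} precisely by the Cauchy--Schwarz inequality; your version simply spells out the bookkeeping (splitting off the $1/|H_k|$ weights, the $O(1/K)$ counting error, and checking that $\ell\le K^{3/4-\varepsilon}$ is exactly the hypothesis needed to invoke the variance bound), all of which is sound.
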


\begin{remark}
  It may be simpler to study the expectation directly, and one might hope to establish an asymptotic formula for some $\delta_1>\delta_2$.
  Recall that Khan \cite{Khan2014} proved the expectation for the $L^4$-norm is $2$ as conjectured by Blomer, Khan, and Young. 
\end{remark}

\subsection{An application to the triple product $L$-functions}

Let $f\in H_k$ with  the $n$-th Hecke eigenvalue $\lambda_f(n)$.
Denote by $\alpha_{f}(p)$, $\alpha_{f}(p)^{-1}$  the Satake parameters  of $f$ at a prime $p$, 
so that $\lambda_f(p)=\alpha_f(p)+\alpha_f(p)^{-1}$.
The symmetric square $L$-function of $f$ is defined by
\[
  L(s,\sym^2 f) = \zeta(2s) \sum_{n\geq1} \frac{\lambda_f(n^2)}{n^s}, \quad \Re(s)>1,
\]
which admits analytic continuation to the entire complex plane, 
and satisfies $L(1,\sym^2f)\neq0$.

Let $f\in H_k$, $g\in H_\ell$ and $h\in H_{k+\ell}$.
The triple product $L$-function of $f,g,h$ is defined  by
\[
  L(s,f\times g\times h) = \prod_{p} \prod_{a=\pm1}\prod_{b=\pm1}\prod_{c=\pm1} \left(1-\frac{\alpha_{f}(p)^{a}\alpha_{g}(p)^{b}\alpha_{h}(p)^{c}}{p^s}\right)^{-1}, \quad \Re(s)>1.
\]
It admits an analytic continuation to the entire complex plane.
By Watson's formula we know $|\langle fg , h\rangle_{k+\ell}|^2$ is related to  the triple product $L$-values $L(1/2,f\times g\times h)$. 
Note that $\langle |F_k|^2, |G_\ell|^2 \rangle  = \langle fg,fg\rangle_{k+\ell} $. 
By Parseval's identity, we have 
\begin{equation}\label{eqn:<f2g2>=L-values}
  \frac{1}{\vol(\Gamma\backslash\mathbb{H})} \langle |F_k|^2, |G_\ell|^2 \rangle  
  =  \frac{2\pi^2}{k+\ell-1} \sum_{h\in H_{k+\ell}} \frac{
    L(1/2,f\times g\times h)}{L(1,\sym^2 h)} 
    \frac{\zeta(2)}{2 L(1,\sym^2 f) L(1,\sym^2 g)}.
\end{equation}
Unconditionally, Blomer, Khan, and Young \cite[Corollary 1.5]{BlomerKhanYoung2013distribution} proved the following nontrivial upper bound for \eqref{eqn:<f2g2>=L-values}
\[
  O((k\ell)^{1/6+\varepsilon}).
\]
Under GRH, by a similar argument as in Zenz \cite{Zenz}, one may prove a sharp upper bound
\[
  O(L(1,\sym^2 f) L(1,\sym^2 g)).
\]

As a consequence  of Theorem \ref{thm:var} and the above discussion, we have the following result for the first moment of $L(1/2,f\times g\times h)$.

\begin{corollary}\label{cor:moment8}
  Let $K\geq12$ be sufficiently large. Let $\ell\geq12$ be an even integer, and   $g\in H_\ell$ a Hecke eigenform. Assume $\ell \leq K^{3/4-\varepsilon}$.
   Then for almost all even integers $k\in(K,2K]$ and almost all $f\in H_k$, we have
  \begin{equation*}
  \frac{2\pi^2}{k+\ell-1} \sum_{h\in H_{k+\ell}} \frac{
    L(1/2,f\times g\times h)}{L(1,\sym^2 h)}
    = 2\frac{L(1,\sym^2 f) L(1,\sym^2 g)}{\zeta(2)} + O(K^{-\varepsilon}).
  \end{equation*}
\end{corollary}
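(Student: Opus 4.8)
The plan is to derive Corollary~\ref{cor:moment8} directly from Theorem~\ref{thm:var} by combining the variance bound with the exact identity~\eqref{eqn:<f2g2>=L-values}. The starting point is the observation that the left-hand side of the asserted asymptotic is, up to the explicit constant factor $\zeta(2)/(2L(1,\sym^2 f)L(1,\sym^2 g))$ appearing in~\eqref{eqn:<f2g2>=L-values}, precisely the normalized mixed moment $\frac{1}{\vol(\Gamma\backslash\mathbb{H})}\langle |F_k|^2,|G_\ell|^2\rangle$. Thus the first step is simply to multiply both sides of the target identity by this factor and recognize that the claim is equivalent to
\[
  \frac{1}{\vol(\Gamma\backslash\mathbb{H})}\langle |F_k|^2,|G_\ell|^2\rangle = 1 + O\!\left(\frac{\zeta(2)}{2L(1,\sym^2 f)L(1,\sym^2 g)}\,K^{-\varepsilon}\right),
\]
so that modulo control of the symmetric-square $L$-values at $1$, it suffices to show that~\eqref{eqn:mixed22} holds for almost all $f\in H_k$ and almost all $k\in(K,2K]$.

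The second step is to extract this almost-everywhere statement from Theorem~\ref{thm:var}. Theorem~\ref{thm:var} asserts that the mean-square deviation $\frac{1}{K^2}\sum_{K<k\le 2K}\sum_{f\in H_k}\big|\frac{1}{\vol}\langle|F_k|^2,|G_\ell|^2\rangle - 1\big|^2 \ll \ell^{4/3}K^{-1+\varepsilon}$, which under the hypothesis $\ell\le K^{3/4-\varepsilon}$ gives a bound of size $\ll K^{-1+\varepsilon'}$ for a suitable $\varepsilon'$. By Chebyshev's inequality (equivalently, a standard counting argument on the second moment), the number of pairs $(k,f)$ for which $\big|\frac{1}{\vol}\langle|F_k|^2,|G_\ell|^2\rangle - 1\big|>K^{-\varepsilon''}$ can be bounded by the total second moment divided by $K^{-2\varepsilon''}$, yielding a count of $o(K^2)$ exceptional pairs once $\varepsilon''$ is chosen small enough. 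Since the total number of pairs $(k,f)$ with $K<k\le 2K$ and $f\in H_k$ is $\asymp K^2$ (using $|H_k|=k/12+O(1)$), this means that for all but a vanishing proportion of pairs we have $\frac{1}{\vol}\langle|F_k|^2,|G_\ell|^2\rangle = 1+O(K^{-\varepsilon''})$, which is exactly the ``almost all'' conclusion.

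The third step is to convert this into the stated bound by handling the $L$-value prefactor. Here I would invoke the standard bounds $L(1,\sym^2 f)\gg (\log k)^{-1}$ (or even $k^{-\varepsilon}\ll L(1,\sym^2 f)\ll k^{\varepsilon}$ unconditionally by Hoffstein--Lockhart type estimates) and the analogous bound for $g$; since these $L$-values lie in a narrow range $K^{-\varepsilon}\ll L(1,\sym^2 f)L(1,\sym^2 g)\ll K^{\varepsilon}$, the prefactor $\zeta(2)/(2L(1,\sym^2 f)L(1,\sym^2 g))$ is bounded above and below by fixed powers of $K^{\varepsilon}$. Rearranging~\eqref{eqn:<f2g2>=L-values} and absorbing these powers into the error term (after relabeling $\varepsilon$) produces the claimed asymptotic with main term $2L(1,\sym^2 f)L(1,\sym^2 g)/\zeta(2)$ and error $O(K^{-\varepsilon})$.

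I expect the only genuine subtlety, rather than a real obstacle, to lie in bookkeeping the two independent ``almost all'' quantifiers and the two distinct $\varepsilon$'s: one must ensure that the exceptional set in $(k,f)$ coming from Chebyshev is compatible with the phrase ``almost all even $k$ and almost all $f\in H_k$,'' which requires noting that a set of $o(K^2)$ exceptional pairs cannot concentrate on more than $o(K)$ values of $k$ except on a sparse subset, so a Fubini-type splitting of the double average gives the nested almost-all statement. Everything else is a direct consequence of Theorem~\ref{thm:var} together with routine lower bounds for symmetric-square $L$-values, so no new analytic input beyond the theorem is needed.
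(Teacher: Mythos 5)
Your proposal is correct and follows essentially the same route as the paper's own proof in \S\ref{subsec:application}: reduce via the identity \eqref{eqn:<f2g2>=L-values} and the bounds \eqref{eqn:sym^2-1} to showing $\frac{1}{\vol(\Gamma\backslash\mathbb{H})}\langle |F_k|^2,|G_\ell|^2\rangle = 1+O(K^{-\varepsilon})$ for almost all pairs $(k,f)$, then extract this from Theorem \ref{thm:var} by Chebyshev/Markov. The only (cosmetic) difference is bookkeeping: the paper first discards the $k$ with large $\mathscr{S}_k=\sum_{f\in H_k}|\cdots|^2$ and then applies Chebyshev within each good $k$, whereas you apply Chebyshev to pairs first and then split by $k$; both yield the same nested almost-all statement.
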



As an immediate corollary, we have the following nonvanishing result for the triple product central $L$-values.

\begin{corollary}\label{cor:nonvanishing}
  Let $K\geq12$ be sufficiently large. Let $\ell\geq12$ be an even integer, and   $g\in H_\ell$ a Hecke eigenform. Assume $\ell \leq K^{3/4-\varepsilon}$.
  Then for almost all even integer $k\in(K,2K]$ and almost all $f\in H_k$, there exists some $h\in H_{k+\ell}$ such that $L(1/2,f\times g\times h)\neq 0$.
\end{corollary}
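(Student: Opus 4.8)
The plan is to read off the nonvanishing directly from the asymptotic formula in Corollary \ref{cor:moment8}, using positivity. Fix $\varepsilon>0$ and $\ell\le K^{3/4-\varepsilon}$, and consider the set of pairs $(k,f)$ with $k\in(K,2K]$ even and $f\in H_k$ for which the conclusion of Corollary \ref{cor:moment8} holds; by that corollary these are almost all such pairs. For any one of them we have
\[
  \frac{2\pi^2}{k+\ell-1} \sum_{h\in H_{k+\ell}} \frac{L(1/2,f\times g\times h)}{L(1,\sym^2 h)}
  = 2\frac{L(1,\sym^2 f) L(1,\sym^2 g)}{\zeta(2)} + O(K^{-\varepsilon}),
\]
so it will be enough to show that the right-hand side is strictly positive.

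First I would invoke two standard positivity facts. On the one hand, by Watson's formula each central value $L(1/2,f\times g\times h)$ equals a nonnegative constant times $|\langle fg,h\rangle_{k+\ell}|^2$, hence $L(1/2,f\times g\times h)\ge 0$, while $L(1,\sym^2 h)>0$ for every $h$; thus every summand on the left is nonnegative, and the sum vanishes if and only if every central value does. On the other hand, the classical lower bound $L(1,\sym^2 f)\gg (\log k)^{-1}$, together with the analogous bound for $g$ (for instance from the absence of Siegel zeros for symmetric square $L$-functions), gives
\[
  2\frac{L(1,\sym^2 f) L(1,\sym^2 g)}{\zeta(2)} \gg \frac{1}{(\log K)^2}.
\]

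Next I would compare the two sides: since $(\log K)^{-2}$ dominates $K^{-\varepsilon}$ for all sufficiently large $K$, the main term outweighs the error term, forcing the left-hand side to be $\gg (\log K)^{-2}>0$. By the nonnegativity of the summands this is possible only if $L(1/2,f\times g\times h)\neq 0$ for at least one $h\in H_{k+\ell}$, which is the claim. I do not expect any real obstacle here, since the substantive work is already contained in Corollary \ref{cor:moment8}; the only points needing attention are that the lower bound for $L(1,\sym^2 f)$ is uniform in $k$ and that the central triple product values are genuinely nonnegative, both of which are well known.
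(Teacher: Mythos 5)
Your proposal is correct and is exactly the argument the paper intends: the paper states Corollary \ref{cor:nonvanishing} as an immediate consequence of Corollary \ref{cor:moment8}, relying on the nonnegativity $L(1/2,f\times g\times h)\geq 0$ from Watson's formula together with the lower bound $L(1,\sym^2 f)L(1,\sym^2 g) \gg K^{-o(1)}$ (cf.\ \eqref{eqn:sym^2-1}), so that the positive main term dominates the $O(K^{-\varepsilon})$ error and forces at least one nonzero summand. Your use of the Hoffstein--Lockhart bound $L(1,\sym^2 f)\gg (\log k)^{-1}$ is a harmless variant of the same positivity-plus-lower-bound step.
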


\subsection{Moments of $L$-functions}

To prove Theorem \ref{thm:var}, we will use various moments of $L$-functions. 
The symmetric square lift $\sym^2 f$  of $f\in H_k$ is a $\GL_3$ automorphic form.
The $(m,n)$-th Fourier coefficient $A(m,n)$ of $\sym^2 f$ is given by
\begin{equation}\label{eqn:A(m,n)}
  A(m,n) = \sum_{d\mid (m,n)} \mu(d) A(m/d,1)A(1,n/d),
\end{equation}
and
\[
  A(n,1)=A(1,n)= \sum_{a^2b=n}\lambda_f(b^2).
\]
Here $\mu$ is the M\"obius function.
Let $\phi$ be a Hecke--Maass cusp form for $\Gamma$, with the $n$-th Hecke eigenvalue $\lambda_\phi(n)$ and the spectral parameter $t_\phi$.
The Rankin--Selberg $L$-function for $\sym^2 f$ and $\phi$ is defined by
\begin{equation}\label{eqn:RS3*2}
  L(s,\sym^2 f\times \phi) = \sum_{m,n\geq1} \frac{A(m,n)\lambda_\phi(n)}{(m^2n)^s}, \quad \Re(s)>1.
\end{equation}
It has an analytic continuation to the whole complex plane.
See e.g. \cite[\S5]{LuoSarnak2004} for these facts and further background.

We have the following estimate of the first moment of $L$-functions.

\begin{theorem}\label{thm:1moment}
  Let $K>2$ be  sufficiently large. 
  Let $\phi$ be an even Hecke--Maass cusp form for $\Gamma$ with the spectral parameter $t_\phi$.
   Assume $t_\phi\leq K^{1/2-\varepsilon}$.
  Then  we have
  \[
    \sum_{K<k\leq 2K}  \sum_{f\in H_k}  L(1/2,\sym^2 f\times\phi)
    \ll K^{2+\varepsilon} ,
  \]
  for any $\varepsilon>0$.
\end{theorem}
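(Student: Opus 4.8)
The plan is to combine an approximate functional equation for $L(1/2,\sym^2 f\times\phi)$ with the Petersson trace formula over $f\in H_k$, and then sum over the weight $k$. The Rankin--Selberg $L$-function $L(s,\sym^2 f\times\phi)$ has degree $6$ and analytic conductor $\asymp k^4(1+t_\phi)^2$, so its ``length'' is $\asymp k^2 t_\phi$; since $\sym^2 f\times\phi$ is self-dual, the approximate functional equation gives
\[
  L(1/2,\sym^2 f\times\phi)\ll \Big|\sum_{m,n\geq1}\frac{A(m,n)\lambda_\phi(n)}{(m^2n)^{1/2}}\,V\Big(\frac{m^2n}{k^2t_\phi}\Big)\Big| + k^{-100},
\]
where $V$ is smooth, rapidly decaying, and depends on $k$ and $t_\phi$. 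Writing $V$ through its Mellin transform isolates the $k$-dependence in a factor $(k^2t_\phi)^{u}$ and makes the eventual sum over $k$ transparent.

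Next I would insert the multiplicative structure of the coefficients. By \eqref{eqn:A(m,n)} together with $A(N,1)=\sum_{a^2b=N}\lambda_f(b^2)$, the coefficient $A(m,n)$ becomes a divisor sum of products $\lambda_f(b_1^2)\lambda_f(b_2^2)$, which the Hecke relation $\lambda_f(b_1^2)\lambda_f(b_2^2)=\sum_{e\mid(b_1^2,b_2^2)}\lambda_f(b_1^2b_2^2/e^2)$ converts into single eigenvalues $\lambda_f(M)$. Applying the Petersson formula
\[
  \sum_{f\in H_k}\omega_f\,\lambda_f(M)=\delta_{M=1}+2\pi i^{-k}\sum_{c\geq1}\frac{S(M,1;c)}{c}\,J_{k-1}\Big(\frac{4\pi\sqrt M}{c}\Big),
\]
with harmonic weights $\omega_f\asymp (kL(1,\sym^2 f))^{-1}$, the diagonal $M=1$ forces $b_1=b_2$ and leaves an absolutely convergent Dirichlet series in the remaining variables $d,a_1,a_2,b$; this produces a main term of size $\ll (kt_\phi)^{\varepsilon}$ for the harmonic-weighted first moment.

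The principal difficulty is the off-diagonal ($M\neq1$) contribution, namely the Kloosterman--Bessel terms above weighted by $\lambda_\phi(n)(m^2n)^{-1/2}V(\cdots)$ and summed over the long ranges of $m,n$ and over the even weights $k\in(K,2K]$. Here I would carry out the sum over $k$ first, exploiting that $i^{-k}J_{k-1}(x)$ decays rapidly for $x\ll k$ and oscillates in $k$. On the support of $V$ one has $M\ll k^2t_\phi$, so $J_{k-1}(4\pi\sqrt M/c)$ is non-negligible only when $4\pi\sqrt M/c\gg k$, i.e. when $c\ll t_\phi^{1/2}$; the hypothesis $t_\phi\leq K^{1/2-\varepsilon}$ confines $c$ to a very short range, after which Weil's bound for $S(M,1;c)$ and uniform estimates for $J_{k-1}$ (including the Airy regime $\sqrt M/c\asymp k$) show that the total off-diagonal is smaller than the main term by a power of $K$. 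Making these bounds uniform in $M$, $c$ and $k$ is the technical heart of the argument, and is precisely where the restriction on $t_\phi$ enters.

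Finally I would remove the harmonic weights. Since $\omega_f^{-1}\ll kL(1,\sym^2 f)\ll k\log k$, and these central values are nonnegative—$L(1/2,\sym^2 f\times\phi)\geq0$, which follows from the factorization $L(1/2,f\times f\times\phi)=L(1/2,\sym^2 f\times\phi)\,L(1/2,\phi)$ together with the nonnegativity of triple products and of $L(1/2,\phi)$—one may estimate the unweighted sum termwise,
\[
  \sum_{f\in H_k}L(1/2,\sym^2 f\times\phi)\leq\Big(\max_{f\in H_k}\omega_f^{-1}\Big)\sum_{f\in H_k}\omega_f\,L(1/2,\sym^2 f\times\phi)\ll k^{1+\varepsilon}.
\]
(Alternatively one removes the weight by expanding $\omega_f^{-1}$ through the Dirichlet series of $L(1,\sym^2 f)$ and repeating the Petersson analysis, which avoids any appeal to positivity.) Summing over $K<k\leq2K$ then yields the claimed bound $\ll K^{2+\varepsilon}$.
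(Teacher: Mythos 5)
Your skeleton---approximate functional equation of length $\asymp k^2t_\phi$, the decomposition \eqref{eqn:A(m,n)} of $A(m,n)$ into Hecke eigenvalues, the Petersson formula, a separate diagonal evaluation, and removal of the harmonic weight via nonnegativity and $L(1,\sym^2 f)=k^{o(1)}$---is indeed the paper's skeleton, and your diagonal and unweighting steps are sound. The genuine gap is in the off-diagonal analysis, which you rightly call the heart of the argument. Your premise that ``on the support of $V$ one has $M\ll k^2t_\phi$'' is false: the approximate functional equation restricts $m^2n\ll k^2t_\phi$, but the eigenvalue index your manipulations produce is $M=b_1^2b_2^2/e^2$ with $a_1^2b_1=m/d$, $a_2^2b_2=n/d$, so $\sqrt{M}$ can be as large as $mn$; taking $m=d=1$ and $n\asymp k^2t_\phi$ squarefree gives $\sqrt{M}=n\asymp k^2t_\phi$, i.e.\ $M$ ranges up to $\asymp k^4t_\phi^2$. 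Consequently your truncation ``$c\ll t_\phi^{1/2}$'' is wrong: even for a single $k$, the factor $J_{k-1}(4\pi\sqrt{M}/c)$ is non-negligible throughout $c\ll \sqrt{M}/k$, a range of length up to $\asymp kt_\phi$, and the quantitative analysis you build on that very short $c$-range collapses.

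Moreover, the tools you invoke after truncating $c$---Weil's bound plus uniform bounds for $J_{k-1}$, i.e.\ absolute values---cannot close the argument even in the correct ranges. The paper needs two further inputs. First, the $k$-average is executed via Lemma \ref{lem:J} (Khan), which replaces $\sum_k i^kW(\frac{k-1}{K})J_{k-1}(x)$ by $-\frac{K}{\sqrt{x}}\Im\{e(-1/8)e^{ix}\breve{W}(K^2/2x)\}$ plus an admissible error; the decay of $\breve{W}$ forces $x\gg K^{2-\varepsilon}$, a cutoff far stronger than the pointwise condition $x\gg k$ your argument relies on, and it truncates $c\ll NK^{\varepsilon-2}\ll t_\phi K^{2\varepsilon}$ (in the paper's notation $x=4\pi m_2n_2/c$). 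Second---and this is the step you are missing entirely---even after this truncation, taking absolute values with Weil's bound yields roughly $NK^{-1+\varepsilon}\asymp K^{1+\varepsilon}t_\phi$ for the weighted moment, which overshoots the target $K^{1+\varepsilon}$ by the factor $t_\phi$, possibly as large as $K^{1/2-\varepsilon}$. The paper therefore keeps the oscillation $e(\pm 2m_2n_2/c)$, sorts the long $n$-variable into residue classes mod $c$, and applies Godber's bound $\sum_{n\le N}\lambda_\phi(n)e(n\alpha)\ll N^{1/2+\varepsilon}t_\phi^{1/2+\varepsilon}$ (Lemma \ref{eqn:GL2exp-sum}) to obtain square-root cancellation, leading to \eqref{eqn:M111<<}, namely $\ll N^{3/2}t_\phi^{1/2}K^{-3+\varepsilon}\ll K^{\varepsilon}t_\phi^2$. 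It is exactly here---not in confining $c$---that the hypothesis $t_\phi\le K^{1/2-\varepsilon}$ enters. (A minor further point: deducing $L(1/2,\sym^2 f\times\phi)\ge 0$ from the factorization $L(1/2,f\times f\times\phi)=L(1/2,\sym^2 f\times\phi)L(1/2,\phi)$ breaks down when $L(1/2,\phi)=0$; the paper instead cites Lapid's theorem directly.)
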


This extends a result of Luo--Sarnak \cite[\S5]{LuoSarnak2004}, who treated the case of fixed $t_\phi$.

Analogously, we will need the following estimate for the second moment of the symmetric square $L$-functions.

\begin{theorem}\label{thm:2moment}
  Let $K>2$ be  sufficiently large.   Let $t\in \mathbb{R}$ such that $|t|\leq K^{1/2-\varepsilon}$.
  Then  we have
  \[
    \sum_{K<k\leq 2K}
    \sum_{f\in H_k}  |L(1/2+it,\sym^2 f )|^2
    \ll K^{2+\varepsilon},
  \]
  for any $\varepsilon>0$.
\end{theorem}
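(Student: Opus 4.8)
The goal is to bound the second moment of the symmetric square $L$-function at the central point (with a small vertical shift $t$), averaged over the weight $k\in(K,2K]$ and over the Hecke basis $H_k$. Let me think about how I would prove this.

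First, I would use an approximate functional equation for $|L(1/2+it, \operatorname{sym}^2 f)|^2$. The symmetric square $L$-function has degree $3$, and its square has degree $6$, so the conductor is roughly $k^3$ (since $\operatorname{sym}^2 f$ has analytic conductor $\asymp k^2$). The approximate functional equation for $|L(1/2+it, \operatorname{sym}^2 f)|^2$ would express it as a sum over $m, n$ with $m^2 n, m'^2 n'$ up to the square root of the conductor, i.e. roughly $k^3$. Specifically, using the Dirichlet series $L(s, \operatorname{sym}^2 f) = \sum_n A(n,1)/n^s = \zeta(2s)\sum_n \lambda_f(n^2)/n^s$, I'd get something like a smoothed sum $\sum_{n_1, n_2} A(n_1,1)\overline{A(n_2,1)}/(n_1 n_2)^{1/2} V(n_1 n_2/k^3)$ times archimedean factors.

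The key step is then to interchange and execute the average over $f$ and $k$. The natural tool is the Petersson trace formula (or Eichler–Selberg), applied weight-by-weight: $\sum_{f\in H_k} \omega_f \lambda_f(m)\lambda_f(n) = \delta_{m=n} + (\text{Kloosterman/Bessel terms})$, where $\omega_f = \Gamma(k-1)/((4\pi)^{k-1}\langle f,f\rangle)$ are harmonic weights. Since $A(n,1) = \sum_{a^2 b = n}\lambda_f(b^2)$ and $\lambda_f(b^2)$ is itself a Hecke eigenvalue combination, I would express the products $A(n_1,1)\overline{A(n_2,1)}$ as linear combinations of $\lambda_f(r)\lambda_f(s)$ via Hecke multiplicativity, then apply Petersson. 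I must be careful about the harmonic weights $\omega_f$ versus the natural counting; removing the harmonic weight costs a factor and can be handled by the standard sieving of Kowalski–Michel / Iwaniec–Luo–Sarnak, or by noting $\omega_f \asymp k^{-1+o(1)}$ and $1/L(1,\operatorname{sym}^2 f)$ is harmless. The averaging over $K<k\le 2K$ smooths the archimedean weights and effectively widens the family to size $\asymp K^2$.

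After applying Petersson, the diagonal term $\delta_{m=n}$ contributes the main term, which upon summing the diagonal length (of size $\asymp k^3$, but with the logarithmic density of $\sum 1/(m^2 n)$ being $O(K^\varepsilon)$) and summing over $k$ yields the expected $K^{2+\varepsilon}$. The main obstacle, and where the restriction $|t| \le K^{1/2-\varepsilon}$ enters, is controlling the off-diagonal Kloosterman terms: these involve sums of $S(m,n;c) J_{k-1}(4\pi\sqrt{mn}/c)$, and since the Bessel function $J_{k-1}$ is very small until $\sqrt{mn}/c \gtrsim k$, the sum over $c$ and the diagonal length $mn$ up to $k^6$ must be balanced. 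I expect the hardest part is showing the off-diagonal is genuinely lower-order: one estimates the Bessel function transition range, applies Weil's bound on Kloosterman sums, and sums over $c$; the vertical shift $t$ multiplies the archimedean length by a factor of $(1+|t|)^{O(1)}$, so the condition $|t|\le K^{1/2-\varepsilon}$ is exactly what keeps the effective length below the threshold where the off-diagonal would dominate. The average over $k$ (rather than a single fixed $k$) is helpful here, since it introduces extra oscillation/cancellation in the Bessel-Kloosterman sum that can be exploited (e.g.\ via the large sieve for $\operatorname{sym}^2$ or Poisson summation in the $c$-sum), allowing the clean bound $K^{2+\varepsilon}$ rather than a loss.
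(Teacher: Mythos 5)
Your overall framework---approximate functional equation, Petersson trace formula in each weight, diagonal main term plus off-diagonal Bessel--Kloosterman terms, harmonic weights removed via Hoffstein--Lockhart---is indeed the skeleton of the paper's proof (which, rather than invoking a degree-$6$ functional equation, squares the degree-$3$ approximate functional equation of Lemma \ref{lem:AFE}, producing Dirichlet polynomials of length $N\leq K^{1+\varepsilon}\sqrt{T}$ with $T=1+|t|$). But your bookkeeping of lengths is off: the conductor of $L(1/2+it,\sym^2 f)\overline{L(1/2+it,\sym^2 f)}$ is $\asymp k^4(1+|t|)^2$, not $k^6$, so the bilinear ranges satisfy $mn\ll k^{2}(1+|t|)$, not $mn\ll k^6$. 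This is not cosmetic, because the entire off-diagonal analysis hinges on the precise bound $N^2\ll K^{2+\varepsilon}T$.

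The genuine gap is your treatment of the off-diagonal. The method you actually propose (``estimate the Bessel transition range, apply Weil's bound, sum over $c$'') fails by a factor of roughly $K$, even at $t=0$: for a single weight $k\asymp K$, the transition range $4\pi mn/c\asymp k$ contributes, after Weil and $|J_{k-1}|\ll k^{-1/3}$, about $k\cdot N^2k^{-1/2-1/3}=N^2k^{1/6}\gg K^{2}$ per weight, and the oscillatory range $x\gg k$ contributes about $N^2\asymp K^{2}T$ per weight; summing over $\asymp K$ weights gives $\gg K^{3}$. So no per-$k$ estimate can succeed; the $k$-average must be exploited \emph{before} any absolute-value estimation. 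The paper does this with Lemma \ref{lem:J} (Khan's lemma): the smoothed sum $\sum_k i^k W(\frac{k-1}{K})J_{k-1}(x)$ equals an explicit term of size $Kx^{-1/2}$ carrying the phase $e^{\pm ix}$ and the rapidly decaying factor $\breve W(K^2/2x)$, plus an error $O(x/K^4)$; the decay of $\breve W$ localizes $c\ll N^2/K^{2-\varepsilon}\ll TK^{O(\varepsilon)}$, a far shorter $c$-range than the $c\ll mn/K$ you would retain. Even then one is not done, since the phase $e(\pm 2mn/c)$ survives: the paper splits into cases, using a trivial bound when $|t|\leq K^\varepsilon$ (the $c$-sum then has $O(K^{O(\varepsilon)})$ terms), and for $K^\varepsilon\leq|t|\leq K^{1/2-\varepsilon}$ applying Poisson summation in $n$ modulo $c$, killing the zero frequency by integration by parts against the oscillation of $n^{-it}$ (this is where $|t|\geq K^\varepsilon$ is needed) and the nonzero frequencies because $|nN/c|\geq K^{2-\varepsilon}/N\geq K^{1-2\varepsilon}/\sqrt{T}\gg K^{\varepsilon}T$. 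That last inequality---non-resonance between the linear phase $nN\xi/c$ and the phase $t\log\xi$---is where the hypothesis $|t|\leq K^{1/2-\varepsilon}$ actually enters, not a ``length threshold'' as you suggest. Your alternative pointers (``large sieve for $\sym^2$,'' ``Poisson summation in the $c$-sum'') are not developed, and the first is not available with useful strength; as written, the proposal identifies the hard step but does not carry it out.
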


We also require an upper bound for a mixed moment of $L$-functions. Interestingly, this can be deduced from $L^4$-norm bounds for Hecke eigenforms.
\begin{theorem}\label{thm:mixedmoment}
  Let $g\in H_\ell$. Then we have
  \[
    \sum_{t_{\phi} \ll \ell^{1/2+\varepsilon}}
   L(1/2,\phi) L(1/2,\sym^2g\times \phi)  \exp\left( - \frac{t_{\phi}^2}{\ell}\right)
   \ll \ell^{4/3+\varepsilon}.
  \]
\end{theorem}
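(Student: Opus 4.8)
The plan is to deduce this from the spectral expansion of the $L^4$-norm of $G_\ell=y^{\ell/2}g$, exploiting the positivity of every spectral contribution. Writing $|G_\ell|^2=y^\ell|g|^2\in L^2(\Gamma\backslash\mathbb{H})$ and applying Parseval against a complete orthonormal basis consisting of the constant function, the even Hecke--Maass cusp forms $\phi$, and the Eisenstein series, one gets
\[
  \int_{\Gamma\backslash\mathbb{H}}|G_\ell(z)|^4\,\dd\mu z
  =\big\||G_\ell|^2\big\|_2^2
  =\sum_{\phi}\big|\langle |G_\ell|^2,\phi\rangle\big|^2+(\text{constant and Eisenstein terms}),
\]
in which every summand is nonnegative. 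Since $|G_\ell|^2$ is invariant under $z\mapsto-\bar z$, only even $\phi$ contribute, matching the shape of the sum in the theorem. In particular the cuspidal part $\sum_\phi|\langle|G_\ell|^2,\phi\rangle|^2$ is at most the full $L^4$-norm, which by \eqref{eqn:L^4} applied to $g\in H_\ell$ is $O(\ell^{1/3+\varepsilon})$.

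Next I would invoke Watson's triple product formula for the triple $g,g,\phi$. Using the factorization $L(s,g\times g\times\phi)=L(s,\sym^2 g\times\phi)\,L(s,\phi)$, valid at level one because $g\otimes g=\sym^2 g\oplus\mathbf{1}$, Watson's formula gives
\[
  \big|\langle|G_\ell|^2,\phi\rangle\big|^2
  = c\,W_\infty(t_\phi)\,
    \frac{L(1/2,\phi)\,L(1/2,\sym^2 g\times\phi)}
         {L(1,\sym^2 g)^2\,L(1,\sym^2\phi)},
\]
for an absolute constant $c>0$ and a positive archimedean factor $W_\infty(t_\phi)$, equal to the completed local factors of $L(1/2,g\times g\times\phi)$ divided by those of $L(1,\sym^2 g)^2L(1,\sym^2\phi)$. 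The crucial input is the size of $W_\infty(t_\phi)$ in the weight aspect.

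The technical heart is therefore an archimedean estimate: I claim that for $t_\phi\ll\ell^{1/2+\varepsilon}$ one has $W_\infty(t_\phi)\gg\ell^{-1-\varepsilon}\exp(-t_\phi^2/\ell)$, obtained from Stirling's formula. The dominant gamma factors in the numerator are $\Gamma_{\mathbb{C}}(\ell-\tfrac12\pm it_\phi)$, whose product has magnitude $\asymp(2\pi)^{1-2\ell}|\Gamma(\ell-\tfrac12+it_\phi)|^2$, and the expansion $|\Gamma(\ell-\tfrac12+it_\phi)|^2=\Gamma(\ell-\tfrac12)^2\exp(-t_\phi^2/\ell+O(t_\phi^4/\ell^3))$ produces the Gaussian weight (with negligible error for $t_\phi\ll\ell^{1/2+\varepsilon}$). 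The remaining $t_\phi$-dependent factors in the numerator contribute an exponential $\exp(-\pi t_\phi)$ that cancels exactly against the same exponential arising from $L_\infty(1,\sym^2\phi)$ in the denominator, while the surviving $\ell$-dependence is $\Gamma(\ell-\tfrac12)^2/\Gamma(\ell)^2\asymp\ell^{-1}$, furnishing the stated power. Verifying this cancellation and tracking the normalizing constants is the main calculation; alternatively one may quote directly the spectral expansion of the $L^4$-norm of Blomer, Khan, and Young and lower-bound their weight function in exactly this range. This archimedean lower bound is the only genuine obstacle.

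Finally I would assemble the pieces. Solving the Watson identity for the product of central values and summing over $t_\phi\ll\ell^{1/2+\varepsilon}$ against $\exp(-t_\phi^2/\ell)$ yields
\[
  \sum_{t_\phi\ll\ell^{1/2+\varepsilon}}L(1/2,\phi)L(1/2,\sym^2 g\times\phi)\exp\!\Big(-\frac{t_\phi^2}{\ell}\Big)
  =\frac1c\sum_{t_\phi\ll\ell^{1/2+\varepsilon}}\frac{\exp(-t_\phi^2/\ell)}{W_\infty(t_\phi)}\,L(1,\sym^2 g)^2\,L(1,\sym^2\phi)\,\big|\langle|G_\ell|^2,\phi\rangle\big|^2.
\]
By the archimedean lower bound, $\exp(-t_\phi^2/\ell)/W_\infty(t_\phi)\ll\ell^{1+\varepsilon}$, while the standard bounds $L(1,\sym^2 g)\ll\ell^\varepsilon$ and $L(1,\sym^2\phi)\ll t_\phi^\varepsilon\ll\ell^\varepsilon$ control the arithmetic normalizations. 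Since all coefficients are nonnegative, dropping the restriction on $t_\phi$ only enlarges the sum, so the right-hand side is $\ll\ell^{1+\varepsilon}\sum_\phi|\langle|G_\ell|^2,\phi\rangle|^2\ll\ell^{1+\varepsilon}\cdot\ell^{1/3+\varepsilon}=\ell^{4/3+\varepsilon}$, as claimed.
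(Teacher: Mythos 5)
Your proposal is correct and is essentially the paper's own proof: the paper likewise expands $\|g\|_4^4$ spectrally via the Rankin--Selberg method and Watson's formula, uses the nonnegativity of every spectral term, the archimedean weight $\asymp \ell^{-1}\exp(-t_\phi^2/\ell)$ in the range $t_\phi\ll\ell^{1/2+\varepsilon}$, the Blomer--Khan--Young bound \eqref{eqn:L^4}, and the bounds \eqref{eqn:sym^2-1} to remove the $L(1,\sym^2 g)^2L(1,\sym^2\phi)$ normalization. The only difference is cosmetic: the paper quotes the gamma-ratio asymptotic $\Gamma(\ell-1/2+it)/\Gamma(\ell)\asymp \ell^{-1/2}\exp(-t^2/(2\ell))$ from \cite[\S 3.1]{Huang2024joint} rather than rederiving it by Stirling's formula as you sketch.
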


\begin{remark}
  One may apply the Cauchy--Schwarz inequality and the spectral large sieve inequality to prove
  \[
    \sum_{t_{\phi} \ll \ell^{1/2+\varepsilon}} L(1/2,\phi) L(1/2,\sym^2g\times \phi)
   \ll \ell^{7/4+\varepsilon},
  \]
  which is weaker than Theorem \ref{thm:mixedmoment}.
  I learned this idea to improve the above bounds from Liangxun Li and Chengliang Guo (see \cite[Lemma 6.1]{Guo}).
\end{remark}

\subsection{Plan for this paper}
The rest of this paper is organized as follows.
In \S \ref{sec:preliminaries}, we give some lemmas on $L$-functions and sums of Fourier coefficients.
In \S \ref{sec:var}, we use the spectral method and Watson's formula to prove Theorem \ref{thm:var}.
In \S \ref{sec:moment}, we prove theorems on moments of $L$-functions.

\medskip
\textbf{Notation.}
Throughout the paper, $\varepsilon$ is an arbitrarily small positive number;
all of them may be different at each occurrence.
As usual, $e(x)=e^{2\pi i x}$.
We use $y\asymp Y$ to mean that $c_1 Y\leq |y|\leq c_2 Y$ for some positive constants $c_1$ and $c_2$.
The symbol $\ll_{a,b}$ denotes that the implied constant depends at most on $a$ and $b$.


\section{Preliminaries}\label{sec:preliminaries}

\subsection{$L$-functions}

Let $f\in H_k$ and $\phi$ an even Hecke--Maass cusp form for $\Gamma$.
The functional equation of the symmetric square $L$-function $L(s,\sym^2 f)$ is
\[
  \Lambda(s,\sym^2 f) = \Lambda(1-s,\sym^2 f),
\]
where the completed $L$-function is defined by
\[
  \Lambda(s,\sym^2 f) := L_\infty(s,\sym^2 f)  L(s,\sym^2 f),
\]
with
\[
  L_\infty(s,\sym^2 f) := \pi^{-3s/3} \Gamma\left(\frac{s+1}{2} \right)
  \Gamma\left(\frac{s+k-1}{2} \right)
  \Gamma\left(\frac{s+k}{2} \right) .
\]
The functional equation of the Rankin--Selberg $L$-function $L(s,\sym^2 f\times \phi)$ is
\[
  \Lambda(s,\sym^2 f\times \phi) = \Lambda(1-s,\sym^2 f\times \phi),
\]
where the completed $L$-function is defined by
\[
  \Lambda(s,\sym^2 f\times \phi) := L_\infty(s,\sym^2 f\times \phi)
  L(s,\sym^2 f\times \phi),
\]
with
\[
  L_\infty(s,\sym^2 f\times \phi) := \pi^{-3s}\prod_{\pm} \Gamma\left(\frac{s+1\pm it_\phi}{2} \right)
  \Gamma\left(\frac{s+k-1\pm it_\phi}{2} \right)
  \Gamma\left(\frac{s+k\pm it_\phi}{2} \right) .
\]
See e.g. \cite[\S1]{Khan} and \cite[\S5]{LuoSarnak2004}.
Note that $L(1/2,\sym^2 f \times \phi)\geq0$ (see \cite{Lapid}). We have the following approximate functional equations.

\begin{lemma} \label{lem:AFE}
  Let $t\in \mathbb{R}$.  We have
  \[
    L(1/2+it,\sym^2 f) =  \sum_{ n\geq1} \frac{\lambda_f(n^2)}{n^{1/2+it}} V_{3}^+(n;t)
    + \sum_{ n\geq1} \frac{\lambda_f(n^2)}{n^{1/2-it}} V_{3}^-(n;t),
  \]
  and 
  \[
    L(1/2,\sym^2 f\times \phi) = 2 \sum_{m,n\geq1} \frac{A(m,n)\lambda_\phi(n)}{(m^2n)^{1/2}} V_6(m^2n),
  \]
  where 
  \[
    V_{3}^+(y;t) := \frac{1}{2\pi i} \int_{(2)} \frac{L_\infty(1/2+it+s,\sym^2 f) }{L_\infty(1/2+it,\sym^2 f) } \zeta (1+2it+2s) y^{-s} e^{s^2} \frac{\dd s}{s}, 
  \]
  \[
    V_{3}^-(y;t)  := \frac{L_\infty(1/2-it,\sym^2 f) }{L_\infty(1/2+it,\sym^2 f) } V_3^+(y;-t),
  \]
  and 
  \[
    V_6(y) := \frac{1}{2\pi i} \int_{(2)} \frac{L_\infty(1/2+s,\sym^2 f\times \phi) }{L_\infty(1/2,\sym^2 f\times \phi) } y^{-s} e^{s^2} \frac{\dd s}{s}.
  \]
\end{lemma}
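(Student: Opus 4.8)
The plan is to derive both identities from the standard smoothing-and-shifting recipe, using only that the completed $L$-functions $\Lambda(s,\sym^2 f)$ and $\Lambda(s,\sym^2 f\times\phi)$ are entire (the symmetric square of a holomorphic cusp form of level one is cuspidal on $\GL_3$ by Shimura--Gelbart--Jacquet, hence has entire completed $L$-function, and its Rankin--Selberg convolution with the $\GL_2$ form $\phi$ is again entire since the two factors have different ranks). In each case I would start from a contour integral of the completed $L$-function against the kernel $e^{s^2}\,\dd s/s$, shift the contour across the simple pole at $s=0$, and invoke the functional equation on the shifted line.

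For the first identity I would set
\[
  I_+ := \frac{1}{2\pi i}\int_{(2)} \Lambda(1/2+it+s,\sym^2 f)\, e^{s^2}\,\frac{\dd s}{s}.
\]
Shifting the contour to $\Re(s)=-2$ picks up the residue $\Lambda(1/2+it,\sym^2 f)$ at $s=0$, and no other pole is crossed because $\Lambda(\cdot,\sym^2 f)$ is entire and $e^{s^2}$ decays rapidly on vertical lines. On the shifted line I substitute $s\mapsto -s$ and apply $\Lambda(1/2+it+s,\sym^2 f)=\Lambda(1/2-it-s,\sym^2 f)$, turning the remaining integral into $-I_-$, where $I_-$ is $I_+$ with $t$ replaced by $-t$. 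This gives $\Lambda(1/2+it,\sym^2 f)=I_++I_-$. Finally I would expand $L(1/2+it+s,\sym^2 f)=\zeta(1+2it+2s)\sum_{n}\lambda_f(n^2)n^{-1/2-it-s}$ on $\Re(s)=2$, where it converges absolutely, interchange summation and integration, and divide by $L_\infty(1/2+it,\sym^2 f)$; the term $I_+$ produces $\sum_n \lambda_f(n^2)n^{-1/2-it}V_3^+(n;t)$ and the term $I_-$ produces $\sum_n \lambda_f(n^2)n^{-1/2+it}V_3^-(n;t)$, with the $V_3^\pm$ exactly as stated. I note that the pole of $\zeta(1+2it+2s)$ at $s=-it$ is harmless: the zeta factor is introduced only after the contour has been fixed on $\Re(s)=2$, so it is never crossed.

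The second identity is the same argument at the central point. Starting from $J:=\frac{1}{2\pi i}\int_{(2)}\Lambda(1/2+s,\sym^2 f\times\phi)\,e^{s^2}\,\dd s/s$ and shifting to $\Re(s)=-2$, the residue at $s=0$ gives $\Lambda(1/2,\sym^2 f\times\phi)$; since here the functional equation $\Lambda(1/2+s)=\Lambda(1/2-s)$ has no spectral shift and root number $+1$, the substitution $s\mapsto -s$ returns exactly $-J$, so $\Lambda(1/2,\sym^2 f\times\phi)=2J$. Expanding the Dirichlet series \eqref{eqn:RS3*2} on $\Re(s)=2$, interchanging, and dividing by $L_\infty(1/2,\sym^2 f\times\phi)$ yields the claimed sum with kernel $V_6$; the factor $2$ is precisely the one forced by the $+1$ sign, which is also consistent with $L(1/2,\sym^2 f\times\phi)\ge 0$.

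All steps are routine, and the points requiring care are purely analytic bookkeeping: verifying entirety so that $s=0$ is the only pole crossed, checking that the gamma-factor ratios together with $e^{s^2}$ decay fast enough on vertical lines to justify both the contour shift and the interchange of sum and integral, and keeping the substitution $s\mapsto -s$ straight so that the archimedean arguments match the definitions of $V_3^-$ and $V_6$. I expect the only friction to be arranging the $\GL_3$ and Rankin--Selberg gamma factors so that dividing by $L_\infty$ reproduces the precise integrands in the statement; there is no genuine obstacle, since both are textbook approximate functional equations.
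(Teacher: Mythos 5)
Your argument is correct and coincides with the paper's proof in substance: the paper simply cites \cite[Theorem 5.3]{IwaniecKowalski2004analytic}, and the proof of that general theorem is exactly the contour-shift-plus-functional-equation argument you give, specialized here to $\Lambda(s,\sym^2 f)$ (with the $\zeta(1+2it+2s)$ factor absorbed into $V_3^{\pm}$) and to $\Lambda(s,\sym^2 f\times\phi)$ (where the self-dual functional equation with sign $+1$ forces the factor $2$).
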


\begin{proof}
  This is  \cite[Theorem 5.3]{IwaniecKowalski2004analytic}.
\end{proof}

\begin{lemma}\label{lem:V}
  Let $K>1$ be a sufficiently large number, and $\eta\in(0,1/10)$ a small number. Assume $k\asymp K$, $-K^{1/2-\eta}\leq t\leq K^{1/2-\eta}$, and $t_\phi\leq K^{1/2-\eta}$. Then we have
  \[
    V_3^+(y;t) \ll \left( \frac{y}{ K (1+|t|)^{1/2} }  \right)^{-A},\quad 
    V_6(y) \ll \left( \frac{y}{ K^2 t_\phi }  \right)^{-A},
  \]
  for any $A>0$. Moreover, we have
  \[
    V_3^+(y;t) =  \frac{1}{2\pi i} \int_{\varepsilon-i K^{\varepsilon}}^{\varepsilon+i K^{\varepsilon}} \frac{L_\infty(1/2+it+s,\sym^2 f) }{L_\infty(1/2+it,\sym^2 f) } \zeta (1+2it+2s) y^{-s} e^{s^2} \frac{\dd s}{s} + O\left(K^{-2025}\right), 
  \]
  \[
    V_6(y) = \frac{1}{2\pi i} \int_{\varepsilon-i K^{\varepsilon}}^{\varepsilon+i K^{\varepsilon}}
    \left(\frac{k-1}{2}\right)^{2s} \sum_{j=0}^{J} \frac{P_j(s,t_\phi)}{(k-1)^{j}}
    \pi^{-3s} \prod_{\pm} \frac{\Gamma\left(\frac{s+3/2\pm it_\phi}{2} \right) }{\Gamma\left(\frac{3/2\pm it_\phi}{2} \right)} y^{-s} e^{s^2} \frac{\dd s}{s} + O\left(K^{-2025}\right),
  \]
  where $J=J(\eta)$ is a sufficiently large number, and $P_j(s,t_\phi)$ is a polynomial of degree $2j$.
\end{lemma}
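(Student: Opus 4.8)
The plan is to treat both $V_3^+$ and $V_6$ by the same contour-shifting mechanism, the only analytic input being Stirling's formula for the ratios of archimedean factors. Throughout I write the shift variable as $s = \sigma + iv$ and exploit the Gaussian factor $e^{s^2}$, whose modulus $e^{\sigma^2 - v^2}$ provides super-polynomial decay in $v$.

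First I would establish the rapid-decay bounds. Starting from the defining contour $\Re(s) = 2$, I move the line of integration to $\Re(s) = A$ for an arbitrarily large $A > 0$; no poles are crossed, since the only singularities of the integrand ($1/s$ and, for $V_3^+$, the pole of $\zeta(1+2it+2s)$ at $s = -it$) lie in $\Re(s) \le 0$. On $\Re(s) = A$ Stirling's formula gives, for each Gamma ratio $\Gamma(z + s/2)/\Gamma(z) \asymp z^{s/2}$, that the full archimedean ratio $L_\infty(1/2+it+s, \sym^2 f)/L_\infty(1/2+it, \sym^2 f)$ is of size $\ll (K(1+|t|)^{1/2})^{A}$ up to mild polynomial growth in $v$: the two Gamma factors with parameter $\asymp k$ contribute $(k/2)^A$ and the factor with parameter $\asymp 1 + |t|$ contributes $(1+|t|)^{A/2}$. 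Likewise the six Gamma factors for $\sym^2 f\times \phi$ give $k^{2A} t_\phi^{A} \asymp (K^2 t_\phi)^A$. Since $|y^{-s}| = y^{-A}$ and $e^{\sigma^2 - v^2}$ absorbs the polynomial growth, the $v$-integral converges and we obtain $V_3^+(y;t) \ll (y/(K(1+|t|)^{1/2}))^{-A}$ and $V_6(y) \ll (y/(K^2 t_\phi))^{-A}$; as $A$ is arbitrary these are the claimed rapid-decay estimates.

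Next I would produce the truncated representations. I move the contour to $\Re(s) = \varepsilon$ (again crossing no poles) and discard the range $|v| > K^\varepsilon$. There the factor $e^{-v^2} \le e^{-K^{2\varepsilon}}$ is super-polynomially small in $K$, while the remaining factors grow at most polynomially in $K$ and $v$ — indeed, for large $|v|$ the numerator Gamma factors even decay like $e^{-c|v|}$ by Stirling — so the tail is $\ll K^{-2025}$. For $V_3^+$ this already yields the stated formula. For $V_6$ one more step is needed: I split the six Gamma ratios into the two small factors with parameter $1 \pm it_\phi$, retained exactly as $\prod_{\pm}\Gamma((s+3/2\pm it_\phi)/2)/\Gamma((3/2\pm it_\phi)/2)$, and the four large factors $\Gamma(z_i + s/2)/\Gamma(z_i)$ with $z_i = k/2 + O(1 + t_\phi)$. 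Applying the Stirling expansion $\Gamma(z+s/2)/\Gamma(z) = z^{s/2}\sum_{m\ge0} c_m(s/2) z^{-m}$ to each and expanding $z_i^{s/2} = (k/2)^{s/2}(1 + 2\beta_i/k)^{s/2}$ and $z_i^{-m} = (k/2)^{-m}(1+2\beta_i/k)^{-m}$ in powers of $1/k$, the product of the four leading terms is $(k/2)^{2s}$ and the corrections organize into $(k/2)^{2s}\sum_{j\ge0} P_j(s,t_\phi) k^{-j}$ with $P_0 = 1$ and $P_j$ a polynomial of degree $2j$ (each power of $1/k$ being accompanied by the shift $\beta_i = O(1+t_\phi)$). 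Truncating this at $j = J$ gives the displayed formula.

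The main obstacle is the uniform control of the remainder of the $1/k$-expansion when $t_\phi$ is as large as $K^{1/2-\eta}$. The coefficient $P_j$ grows like $t_\phi^{2j}$, so the $j$-th term is of size $t_\phi^{2j}/k^{j} \le K^{-2\eta j}$; the geometric gain $K^{-2\eta}$ per term is exactly what makes the expansion effective, and it forces the cutoff to depend on $\eta$. Choosing $J = J(\eta)$ with $2\eta(J+1) > 2025$ bounds the tail $j > J$ (together with the contributions of $(k/2)^{2\varepsilon}$, $y^{-\varepsilon}$, the small Gamma factors, and the length-$K^\varepsilon$ integration, all of which are $K^{O(\varepsilon)}$) by $O(K^{-2025+\varepsilon})$. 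Verifying that Stirling applies uniformly — $z_i \asymp k/2$ has argument near $0$ because $t_\phi \ll k$, and $|s/2| \le K^\varepsilon$ is small against $z_i$ — and that both the Stirling remainder and the binomial remainder are genuinely $O(\mathrm{poly}(s,t_\phi)/k^{J+1})$ is the technical heart of the argument.
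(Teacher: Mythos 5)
Your proposal is correct and takes essentially the same approach as the paper, whose entire proof consists of citing the standard argument (\cite[Proposition 5.4]{IwaniecKowalski2004analytic}, i.e.\ contour shifting against the Gaussian factor $e^{s^2}$) for the decay bounds and invoking Stirling's formula for the truncated representations; your write-up simply supplies the details, including the key point that $t_\phi \le K^{1/2-\eta}$ makes each term of the $1/k$-expansion gain a factor $K^{-2\eta}$, which is exactly why $J$ depends on $\eta$.
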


\begin{proof}
  The first claim is standard. See e.g.  \cite[Proposition 5.4]{IwaniecKowalski2004analytic}. 
  The second claim follows from the Stirling's formula. 
\end{proof}

\subsection{The Petersson trace formula}

The Petersson trace formula is given by the following basic orthogonality relation on $H_k$.

\begin{lemma}\label{lem:PTF}
  Let $k\geq12$ be an even integer, and $m,n\geq1$. Then we have 
  \[
    \frac{12\zeta(2)}{(k-1)} \sum_{f\in H_k} \frac{\lambda_f(m)\lambda_f(n)}{ L(1,\operatorname{sym}^2 f)  }
 \;   = \; \delta_{m,n} + 2\pi i^{-k} \sum_{c=1}^{\infty}
                 \frac{S(m,n;c)}{c}J_{k-1}\left(\frac{4\pi\sqrt{mn}}{c}\right),
  \]
  where $\delta_{m,n}=1$ if $m=n$, $0$ otherwise, $S(m,n;c)=\sum_{\substack{d\mod c \\ (d,c)=1} } e(\frac{md+n\bar{d}}{c})$   the Kloosterman sum, and $J_{k-1}(v)$  the $J$-Bessel function. Here $\bar{d}$ is the inverse of $d$ modulo $c$. 
\end{lemma}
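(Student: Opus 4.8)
The plan is to obtain the stated identity from the classical Petersson formula for an orthonormal basis of $S_k$, and then to pass to the arithmetic weight $\frac{1}{L(1,\sym^2 f)}$ via the Rankin--Selberg evaluation of the Petersson norm. Since both $\lambda_f(n)$ and $L(1,\sym^2 f)$ depend only on the Satake parameters, and not on the scaling of $f$, the identity is insensitive to the $L^2$-normalization $\langle f,f\rangle_k=\pi/3$ fixed above, so I may carry out the computation using the arithmetic normalization $a_f(1)=1$ and then read off a scaling-invariant statement.

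First I would introduce the weight-$k$ Poincaré series
\[
  P_m(z) = \sum_{\gamma\in\Gamma_\infty\backslash\Gamma} (cz+d)^{-k}\, e(m\gamma z),
  \qquad \gamma=\begin{pmatrix} a & b \\ c & d\end{pmatrix},
\]
where $\Gamma_\infty$ is the stabilizer of the cusp at $\infty$; for even $k\geq12$ the series converges absolutely and $P_m\in S_k$. Unfolding against the invariant measure shows that for any $f\in S_k$ with Fourier expansion $f(z)=\sum_{n\geq1}a_f(n)e(nz)$ one has $\langle f,P_m\rangle_k = \frac{\Gamma(k-1)}{(4\pi m)^{k-1}}a_f(m)$, so $P_m$ reproduces Fourier coefficients. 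I would then compute the $m$-th Fourier coefficient of $P_n$ in two ways: expanding $P_n$ in an orthonormal basis $\{u_j\}$ of $S_k$ gives $a_{P_n}(m) = \frac{\Gamma(k-1)}{(4\pi n)^{k-1}}\sum_j \overline{\rho_j(n)}\rho_j(m)$, while the classification of $\Gamma_\infty\backslash\Gamma/\Gamma_\infty$ by the bottom row, together with the standard Bessel integral, produces the arithmetic expansion $a_{P_n}(m) = \delta_{m,n} + 2\pi i^{-k}(m/n)^{(k-1)/2}\sum_{c\geq1}\frac{S(m,n;c)}{c}J_{k-1}(\frac{4\pi\sqrt{mn}}{c})$. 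Equating the two and multiplying by $(n/m)^{(k-1)/2}$ yields the symmetric Petersson formula
\[
  \frac{\Gamma(k-1)}{(4\pi\sqrt{mn})^{k-1}}\sum_j \overline{\rho_j(m)}\rho_j(n)
  = \delta_{m,n} + 2\pi i^{-k}\sum_{c\geq1}\frac{S(m,n;c)}{c}J_{k-1}\!\left(\frac{4\pi\sqrt{mn}}{c}\right),
\]
where $u_j(z)=\sum_{n}\rho_j(n)e(nz)$.

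Next I would specialize the orthonormal basis to the normalized Hecke eigenforms: for arithmetically normalized $f$ one has $\rho_f(n)=\lambda_f(n)n^{(k-1)/2}/\langle f,f\rangle_k^{1/2}$ with $\lambda_f(n)\in\mathbb{R}$, so $\overline{\rho_f(m)}\rho_f(n)=\lambda_f(m)\lambda_f(n)(mn)^{(k-1)/2}/\langle f,f\rangle_k$ and the factor $(mn)^{(k-1)/2}$ cancels the one in the denominator of the left-hand side. This leaves $\frac{\Gamma(k-1)}{(4\pi)^{k-1}}\sum_f \frac{\lambda_f(m)\lambda_f(n)}{\langle f,f\rangle_k}$ on the left. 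Finally I would insert the Rankin--Selberg/Shimura evaluation $\langle f,f\rangle_k = \frac{\Gamma(k)}{2\pi^2(4\pi)^{k-1}}L(1,\sym^2 f)$, obtained by unfolding $\langle f,f\rangle_k$ against the real-analytic Eisenstein series $E(z,s)$ and extracting the relevant value of the resulting Rankin--Selberg integral near $s=1$. Using $\Gamma(k)=(k-1)\Gamma(k-1)$ and $2\pi^2=12\zeta(2)$, the constant collapses to $\frac{12\zeta(2)}{k-1}$, giving exactly the asserted identity.

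The only genuinely analytic inputs are the Fourier expansion of the Poincaré series — where the Kloosterman sums arise from the bottom-row classification in $\Gamma_\infty\backslash\Gamma/\Gamma_\infty$ and the $J$-Bessel function from a classical Bessel integral, with absolute convergence guaranteed by $k\geq12$ — and the Rankin--Selberg computation of $\langle f,f\rangle_k$. Both are classical, so I do not expect a real obstacle; the one point demanding care is the exact bookkeeping of the constants $2\pi i^{-k}$ and $\frac{12\zeta(2)}{k-1}$, which is sensitive to the conventions for the automorphy factor, the measure $\dd\mu z$, and the inner product $\langle\cdot,\cdot\rangle_k$. Alternatively, the whole statement is recorded in precisely this normalization in the literature (e.g.\ \cite{IwaniecKowalski2004analytic}) and could simply be cited.
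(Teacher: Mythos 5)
Your proposal is correct and takes essentially the same route as the paper, whose proof of this lemma is simply a citation of \cite[Proposition 14.5]{IwaniecKowalski2004analytic} and \cite[\S2.1]{BlomerKhanYoung2013distribution}: your Poincar\'e-series derivation of the orthonormal-basis Petersson formula, followed by the Rankin--Selberg evaluation $\langle f,f\rangle_k=\frac{\Gamma(k)}{2\pi^2(4\pi)^{k-1}}L(1,\sym^2 f)$ to convert to harmonic weights, is exactly the argument contained in those references. All your constants check out (the residue $3/\pi$ of $E(z,s)$ at $s=1$ and $2\pi^2=12\zeta(2)$ give precisely the factor $\frac{12\zeta(2)}{k-1}$), and your observation that both $\lambda_f(n)$ and $L(1,\sym^2 f)$ are scaling-invariant correctly reconciles the arithmetic normalization $a_f(1)=1$ with the paper's convention $\langle f,f\rangle_k=\pi/3$.
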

\begin{proof}
  See e.g.  \cite[Proposition 14.5]{IwaniecKowalski2004analytic} and
   \cite[\S2.1]{BlomerKhanYoung2013distribution}.
\end{proof}

\subsection{Fourier coefficients}


Let $\phi$ be a Hecke--Maass cusp form for $\SL_2(\mathbb{Z})$ with the spectral parameter $t_\phi$. 
We have the following strong bounds on the $\GL(2)$ exponential sums. 
\begin{lemma}\label{eqn:GL2exp-sum}
  For any $\alpha\in \mathbb{R}$, we have
  \[
    \sum_{n\leq N} \lambda_\phi(n) e(n\alpha) \ll_\varepsilon N^{1/2+\varepsilon} t_\phi^{1/2+\varepsilon},
  \]
  for any $\varepsilon>0$.
\end{lemma}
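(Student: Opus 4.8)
The plan is to obtain square-root cancellation in the length $N$ together with the correct dependence on the spectral parameter by means of the $\GL(2)$ Voronoi summation formula for $\phi$, tracking $t_\phi$ through the Bessel kernel. First I would dispose of the easy range. By Rankin--Selberg one has $\sum_{n\le N}|\lambda_\phi(n)|^2 \ll (Nt_\phi)^{\varepsilon}N$, so Cauchy--Schwarz gives $\sum_{n\le N}|\lambda_\phi(n)| \ll (Nt_\phi)^{\varepsilon}N$; this already beats the claimed bound whenever $N\ll t_\phi$, so I may assume $N\gg t_\phi$. Inserting a smooth dyadic partition of unity (and recovering the sharp cutoff by a routine Mellin/contour argument at the cost of $N^{\varepsilon}$ and a negligible tail), it suffices to bound $S(N):=\sum_{n}\lambda_\phi(n)e(n\alpha)W(n/N)$ for a fixed $W$ supported on $[1,2]$.

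Next, by Dirichlet's approximation theorem I write $\alpha=a/c+\beta$ with $(a,c)=1$, $1\le c\le C$ and $|\beta|\le 1/(cC)$, where $C$ is a parameter to be optimized (I expect $C\asymp \sqrt{N}$). Absorbing $e(n\beta)$ into the smooth weight, I apply $\GL(2)$ Voronoi summation to $\sum_n \lambda_\phi(n)e(na/c)\big(W(n/N)e(n\beta)\big)$, which transforms $S(N)$ into a dual sum of the shape
\[
  S(N)=\frac{1}{c}\sum_{\pm}\sum_{m\ge 1}\lambda_\phi(m)\,e\!\left(\mp\frac{m\bar a}{c}\right)H^{\pm}\!\left(\frac{m}{c^{2}}\right),
\]
where $\bar a$ is the inverse of $a$ modulo $c$, and $H^{\pm}$ is a Hankel-type integral transform of the weight against a Bessel kernel of purely imaginary order $2it_\phi$ (built from $J_{\pm 2it_\phi},Y_{\pm 2it_\phi}$ in the $+$ kernel and $K_{2it_\phi}$ in the $-$ kernel).

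The third and central step is the analysis of $H^{\pm}(m/c^{2})$ by stationary phase. The Bessel argument is $\asymp \sqrt{mN}/c$ and its order is $\asymp t_\phi$, so by WKB the kernel oscillates for all positive arguments, with amplitude $\asymp (\sqrt{mN}/c)^{-1/2}$ when the argument exceeds $t_\phi$ and $\asymp t_\phi^{-1/2}$ below it. Combining the kernel phase with the oscillation $e(n\beta)$, one finds that $H^{\pm}$ is negligibly small unless $m$ lies below an effective cutoff $M\asymp N^{\varepsilon}\,c^{2}\big(t_\phi^{2}+(\beta N)^{2}\big)/N$, governed by the analytic conductor of $\phi$ together with the oscillation from $\beta$; heuristically this is why the final power is $t_\phi^{1/2}$, the fourth root of the conductor $\asymp t_\phi^{2}$. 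Feeding the resulting transform bounds into the dual sum, estimating $\sum_{m\le M}|\lambda_\phi(m)|\ll (MNt_\phi)^{\varepsilon}M$, summing over Farey denominators $c\le C$, and optimizing $C\asymp\sqrt{N}$ yields $S(N)\ll (Nt_\phi)^{1/2+\varepsilon}$. (Equivalently, one may package this step as the functional equation of the additively twisted Dirichlet series $\sum_n\lambda_\phi(n)e(na/c)n^{-s}$ combined with the convexity bound in the conductor $c^{2}t_\phi^{2}$, which produces the factor $t_\phi^{1/2}$ transparently.)

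The hard part will be this third step, specifically the uniform behaviour of the transform $H^{\pm}$ across the transition range where the Bessel argument is comparable to the order $t_\phi$, and simultaneously the near-rational regime $|\beta|N\lesssim t_\phi$ where the extra oscillation $e(n\beta)$ is too weak to create a clean stationary point. In that regime the phase derivative attains a positive minimum of size $\asymp t_\phi$ but the kernel amplitude varies rapidly, so naive integration by parts loses a full power of $t_\phi$ (giving an inadmissible $t_\phi^{3/2}$); one must instead use uniform asymptotics for Bessel functions of large imaginary order, as developed for subconvexity and QUE estimates, to recover the correct saving. Once these uniform kernel bounds are in place, the remaining bookkeeping over $c$ and the two residual regimes $|\beta|N\gg t_\phi$ and $|\beta|N\lesssim t_\phi$ is routine and gives the stated bound.
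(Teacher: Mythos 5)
The paper offers no internal proof of this lemma: its ``proof'' is a one-line citation to Godber's Theorem~1.2, so there is nothing in the paper to match against, and your sketch is in effect a reconstruction of the method behind that reference (smooth dyadic decomposition, Dirichlet approximation $\alpha=a/c+\beta$, $\GL(2)$ Voronoi summation, uniform stationary phase for Bessel kernels of order $2it_\phi$). The architecture is the right one, and your preliminary reduction to $N\gg t_\phi$ is not cosmetic --- it is exactly what makes the correct choice of the Farey parameter admissible later. One inaccuracy there: the uniform bound $\sum_{n\le N}|\lambda_\phi(n)|^2\ll N(Nt_\phi)^{\varepsilon}$ is not known when $N$ is much smaller than $t_\phi$; the standard contour argument with convexity for $L(s,\sym^2\phi)$ gives only $\ll N t_\phi^{\varepsilon}+N^{1/2}t_\phi^{1/2+\varepsilon}$. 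Fortunately this weaker bound still yields $\sum_{n\le N}|\lambda_\phi(n)|\ll N^{1/2}t_\phi^{1/2+\varepsilon}$ for $N\le t_\phi$ after Cauchy--Schwarz, so the easy-range reduction survives as stated.

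There are, however, two genuine gaps. First, your optimization $C\asymp\sqrt{N}$ does not close. In the below-transition/near-rational regime (Bessel argument $\ll t_\phi$, kernel amplitude $\asymp t_\phi^{-1/2}$, with a stationary point against $e(x\beta)$ possible when $|\beta|N\lesssim t_\phi$), the transform has size about $N/t_\phi$ per term, and your own cutoff $M\asymp c^{2}t_\phi^{2}/N$ then gives a contribution $\asymp \frac{1}{c}\cdot M\cdot\frac{N}{t_\phi}\asymp c\,t_\phi$ to $S(N)$. With $c$ as large as $C=\sqrt{N}$ this is $\sqrt{N}\,t_\phi$, overshooting the target by $t_\phi^{1/2}$. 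The correct choice is $C\asymp\sqrt{N/t_\phi}$, balancing $N/C$ (from the regime $|\beta|N\gg t_\phi$) against $Ct_\phi$ to get $\sqrt{Nt_\phi}$; note this requires $C\ge 1$, i.e.\ precisely your reduction $N\gg t_\phi$. Second, the parenthetical claim that the functional equation plus convexity in the conductor $c^{2}t_\phi^{2}$ produces the bound ``transparently'' is false: convexity carries a factor $c^{1/2}$ as well as $t_\phi^{1/2}$, and already at $\beta=0$ one gets only $N^{1/2}c^{1/2}t_\phi^{1/2}\asymp N^{3/4}t_\phi^{1/4}$ for $c$ near $C$, worse than the target (and handling $\beta\ne 0$ by partial summation loses a further factor $1+|\beta|N$). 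So the hybrid stationary-phase analysis cannot be short-circuited. Finally, the step you defer --- uniform asymptotics of the $J_{\pm 2it_\phi}$ and $K_{2it_\phi}$ kernels through the transition range, and the resulting transform bounds --- is the entire analytic content of the cited theorem; as written, your text is a plan for a proof rather than a proof, and one should either carry out that analysis or simply cite Godber, as the paper does.
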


\begin{proof}
  This is  \cite[Theorem 1.2]{Godber2013additive}.
\end{proof}

\subsection{An average of the $J$-Bessel function}

We will use the following estimate of an average of the $J$-Bessel function.
\begin{lemma}\label{lem:J}
  For $x>0$, we have 
  \begin{multline*}
    \sum_{k\equiv 0 \; \mathrm{mod} \; 2} 2i^k W\left(\frac{k-1}{K}\right) J_{k-1}(x) 
    \\
    = 
    -\frac{K}{\sqrt{x}} \Im\left\{ e(-1/8) e^{ix} \breve{W}(K^2/2x)  \right\}
    + O\left(\frac{x}{K^4} \int_{\mathbb{R}} v^4 |\hat{W}(v)| \dd v \right),
  \end{multline*}
  where 
  $\breve{W}(v)=\int_{0}^{\infty} \frac{W(\sqrt{u})}{\sqrt{2\pi u}} e^{iuv} \dd u$ and 
  $\hat{W}(v)=\int_{\mathbb{R}} W(u) e(-uv)\dd u$. 
\end{lemma}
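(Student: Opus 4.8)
The plan is to start from the classical Bessel integral representation $J_{k-1}(x)=\frac{1}{2\pi}\int_{-\pi}^{\pi}e^{i(x\sin\theta-(k-1)\theta)}\dd\theta$, which is valid because $k-1$ is an integer. Substituting this and interchanging the (finite, absolutely convergent) sum with the integral, and writing $k=2m$ so that $i^{k}=e^{i\pi m}$, the left-hand side becomes
\[
  \frac{1}{\pi}\int_{-\pi}^{\pi} e^{ix\sin\theta+i\theta}\sum_{m\in\mathbb{Z}} W\!\left(\frac{2m-1}{K}\right) e^{im(\pi-2\theta)}\dd\theta.
\]
I would then apply Poisson summation to the $m$-sum. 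Setting $\psi=\pi-2\theta$ and changing variables $u=(2t-1)/K$, the dual frequency $n$ contributes $\frac{K}{2}e^{i(\psi-2\pi n)/2}\hat W\!\big(-\tfrac{K(\psi-2\pi n)}{4\pi}\big)$; since $W$ is a fixed smooth bump, the rapid decay of $\hat W$ forces $|\psi-2\pi n|\ll K^{-1+\varepsilon}$. As $\theta$ runs over $[-\pi,\pi]$ this can occur only for $n=0$ (near $\theta=\pi/2$) and $n=1$ (near $\theta=-\pi/2$); all other $n$ give $O(K^{-A})$.

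For the $n=0$ term I would localize at $\theta=\pi/2+s$, use $\sin\theta=\cos s=1-\tfrac{s^2}{2}+O(s^4)$, and extend the $s$-integral to $\mathbb{R}$ at the cost of $O(K^{-A})$. After the substitution $v=Ks/(2\pi)$ the main term reduces to the Fresnel-type integral $\int_{\mathbb{R}}e^{-icv^2}\hat W(v)\dd v$ with $c=2\pi^2 x/K^2$. Evaluating the Gaussian integral via $\int_{\mathbb{R}}e^{-icw^2}\dd w=\sqrt{\pi/c}\,e^{-i\pi/4}$ and using $\supp W\subset(0,\infty)$ to identify $\int_0^\infty W(u)e^{iu^2K^2/(2x)}\dd u=\sqrt{\pi/2}\,\breve W(K^2/2x)$, I obtain, after collecting constants, $\tfrac{iK}{2\sqrt x}e(-1/8)\,e^{ix}\breve W(K^2/2x)$. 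The $n=1$ term is handled identically around $\theta=-\pi/2$, where $\sin\theta=-\cos s$, and produces exactly the complex conjugate $-\tfrac{iK}{2\sqrt x}\overline{e(-1/8)e^{ix}\breve W(K^2/2x)}$. Adding the two via $Z-\bar Z=2i\,\Im Z$ yields precisely the main term $-\tfrac{K}{\sqrt x}\Im\{e(-1/8)e^{ix}\breve W(K^2/2x)\}$.

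The error term arises from the quartic remainder in $\cos s=1-\tfrac{s^2}{2}+O(s^4)$: bounding $|e^{ixO(s^4)}-1|\ll x|s|^4$ and applying the same substitution $v=Ks/(2\pi)$ converts this into $O\!\big(\tfrac{x}{K^4}\int_{\mathbb{R}}v^4|\hat W(v)|\dd v\big)$, which dominates the $O(K^{-A})$ contributions from the Poisson truncation and the tail extensions whenever $x$ is polynomially bounded. I expect the main obstacle to be bookkeeping rather than anything conceptual: one must track the phases ($e(-1/8)$, the powers of $i$, and $e^{\pm ix}$) and all constants through both the Poisson step and the Fresnel evaluation, and confirm that the $n\neq 0,1$ dual frequencies and the integral tails are genuinely negligible at the claimed precision. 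No single step is deep, but matching the exact stated normalization of the main term requires care.
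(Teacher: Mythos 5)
The paper does not prove this lemma at all: its ``proof'' is the single line ``This is [Khan, Lemma 2.3]''. So the relevant comparison is with the proof of the cited result, and your argument is essentially that standard proof: integral representation of $J_{k-1}$, Poisson summation in the weight index, localization of the dual sum at $\theta=\pm\pi/2$, and Fresnel/Gaussian evaluation. I verified your constants and phases: the dual frequency $n$ contributes $\frac{K}{2}e^{i(\psi-2\pi n)/2}\hat{W}\bigl(-K(\psi-2\pi n)/(4\pi)\bigr)$ with $\psi=\pi-2\theta$; at $n=0$ this collapses (using $e^{i\psi/2}=ie^{-i\theta}$) to $\frac{iK}{2\pi}\int e^{ix\cos s}\,\hat{W}(Ks/2\pi)\,\dd s$; the Gaussian step $\int_{\mathbb{R}}e^{-icv^2}\hat{W}(v)\,\dd v=\sqrt{\pi/c}\,e^{-i\pi/4}\int_0^\infty W(u)e^{i\pi^2u^2/c}\,\dd u$ with $c=2\pi^2x/K^2$, followed by the substitution $u^2=w$, gives exactly $\frac{K}{2\sqrt{x}}e(-1/8)\breve{W}(K^2/2x)$, so the $n=0$ term is $iY$ with $Y=\frac{K}{2\sqrt{x}}e(-1/8)e^{ix}\breve{W}(K^2/2x)$, and the $n=1$ term is $\overline{iY}$ (this step uses that $W$ is real-valued, so $\overline{\hat{W}(v)}=\hat{W}(-v)$). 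Your identity ``$Z-\bar{Z}=2i\Im Z$'' is a notational slip---what you actually need is $iY+\overline{iY}=-2\Im Y$---but the resulting main term is exactly the stated one, and the quartic remainder of $\cos$ gives the error $\ll\frac{x}{K^4}\int v^4|\hat{W}(v)|\,\dd v$ as you say.

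The one genuine gap is your treatment of the residual $O(K^{-A})$ terms (the Poisson tail $n\neq0,1$ and the extension of the $s$-integrals). You claim they are dominated by $xK^{-4}\int v^4|\hat{W}|$ ``whenever $x$ is polynomially bounded'', but the condition is backwards: $K^{-A}\ll xK^{-4}$ requires $x$ to be bounded \emph{below}, say $x\geq K^{4-A}$; large $x$ is harmless, small $x$ is the problem. This is not a vacuous concern here, because the paper applies the lemma with $x=4\pi mn/c$ and sums over \emph{all} $c\geq1$, so arbitrarily small $x$ genuinely occurs, and the convergence of the $c$-sums (e.g.\ in $\mathcal{M}_{113}$) relies precisely on the error decaying linearly in $x$. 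The repair is short: for $x\leq1$ the lemma holds trivially because both sides are individually $O(xK^{-4})$---the left-hand side since $|J_{k-1}(x)|\leq(x/2)^{k-1}/(k-1)!$ with $k-1\asymp K$, and the main term since $\breve{W}(v)\ll_B|v|^{-B}$ gives $\frac{K}{\sqrt{x}}|\breve{W}(K^2/2x)|\ll K^{1-2B}x^{B-1/2}\leq K^{1-2B}x$ for $B\geq3/2$---while for $x\geq1$ one fixes $A$ large and absorbs the $O(K^{-A})$ terms as you intended. With this case distinction added (plus the routine justification of interchanging the Fresnel integral with the $u$-integral), your proof is complete and matches the method behind the cited source.
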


\begin{proof}
  This is \cite[Lemma 2.3]{Khan}.
\end{proof}

By integrating by parts several times we get that  
$\breve{W}(v)\ll (1+|v|)^{-B}$ 
and $\hat{W}(v)\ll (1+|v|)^{-B}$ for any $B \geq 0$.

\section{The variance}\label{sec:var}

In this section, we prove Theorem \ref{thm:var}.
For $f\in H_k$ and $g\in H_\ell$, by \cite[Eq. (3.1), (3.2), (3.12) and \S3.2.2]{Huang2024joint} we have
\begin{multline*}
  \frac{1}{\vol(\Gamma\backslash\mathbb{H})}\langle |F|^2,|G|^2\rangle - 1
  \\
  \ll
  \frac{1}{\sqrt{k\ell}} \sum_{t_\phi\ll \ell^{1/2+\varepsilon}}
  \frac{L(1/2,\phi)L(1/2,\sym^2f\times \phi)^{1/2}L(1/2,\sym^2g\times \phi)^{1/2}}
  {L(1,\sym^2f) L(1,\sym^2g) L(1,\sym^2 \phi)} \exp\left( - \frac{t_\phi^2}{2\ell}\right)
  \\
  +
  \frac{1}{\sqrt{k\ell}} \int_{|t|\leq \ell^{1/2+\varepsilon}}
  \frac{|\zeta(1/2+it)|^2 |L(1/2+it,\sym^2 f)L(1/2+it,\sym^2 g)|} {L(1,\sym^2 f)L(1,\sym^2 g)  |\zeta(1+2it)|^2 } \dd t
  + k^{-2025}.
\end{multline*}
Note that we have $L(1/2,\phi)\geq0$ and $L(1/2,\sym^2f\times \phi)\geq0$ (see \cite{KatokSarnak,Lapid}). 
Hence by the Cauchy--Schwarz inequality, we have
\begin{equation}\label{eqn:var2S}
  \frac{1}{K^2} \sum_{K<k\leq 2K}
    \sum_{f\in H_k}  \Big|\frac{1}{\vol(\Gamma\backslash\mathbb{H})}\langle |F_k|^2, |G_\ell|^2 \rangle-1 \Big|^2
    \ll
    \mathcal{S}_c + \mathcal{S}_e + K^{-2025},
\end{equation}
where
\begin{equation}\label{eqn:Sc}
  \mathcal{S}_c := \frac{1}{K^3 \ell} \sum_{K<k\leq 2K}
    \sum_{f\in H_k} \Big|\sum_{t_\phi\ll \ell^{1/2+\varepsilon}}
  \frac{L(1/2,\phi)L(1/2,\sym^2f\times \phi)^{1/2}L(1/2,\sym^2g\times \phi)^{1/2}}
  {L(1,\sym^2f) L(1,\sym^2g) L(1,\sym^2 \phi)}\Big|^2
\end{equation}
and
\begin{equation}\label{eqn:Se}
  \mathcal{S}_e:= \frac{1}{K^3 \ell} \sum_{K<k\leq 2K}
    \sum_{f\in H_k} \Big|
    \int_{|t|\leq \ell^{1/2+\varepsilon}}
  \frac{|\zeta(1/2+it)|^2 |L(1/2+it,\sym^2 f)L(1/2+it,\sym^2 g)|} {L(1,\sym^2 f)L(1,\sym^2 g)  |\zeta(1+2it)|^2 } \dd t  \Big|^2.
\end{equation}
We will estimate $\mathcal{S}_c$ and $\mathcal{S}_e$ separately.

\subsection{The Eisentein series contribution}

Note that by \cite{Iwaniec} and \cite{HL}, we have 
\begin{equation}\label{eqn:sym^2-1}
  \zeta(1+2it) = (1+|t|)^{o(1)}, \quad L(1,\sym^2 f)=k^{o(1)}, \quad  L(1,\sym^2 g) =\ell^{o(1)}.
\end{equation}
We have
\begin{equation*}
  \mathcal{S}_e \ll  \frac{K^\varepsilon}{K^3 \ell} \sum_{K<k\leq 2K}
    \sum_{f\in H_k} \Big|
    \int_{|t|\leq \ell^{1/2+\varepsilon}} |\zeta(1/2+it)|^2 |L(1/2+it,\sym^2 f)L(1/2+it,\sym^2 g)|  \dd t  \Big|^2.
\end{equation*}
By the Cauchy--Schwarz inequality, we have
\begin{multline*}
  \mathcal{S}_e \ll  \frac{K^\varepsilon}{K^3 \ell} \sum_{K<k\leq 2K}
    \sum_{f\in H_k}
    \int_{|t_1|\leq \ell^{1/2+\varepsilon}} |\zeta(1/2+it_1)|^4 |L(1/2+it_1,\sym^2 f)|^2  \dd t_1 \\
    \cdot \int_{|t_2|\leq \ell^{1/2+\varepsilon}}  |L(1/2+it_2,\sym^2 g)|^2  \dd t_2 \\
    =  \frac{K^\varepsilon}{K^3 \ell}
    \int_{|t_1|\leq \ell^{1/2+\varepsilon}} |\zeta(1/2+it_1)|^4
    \sum_{K<k\leq 2K} \sum_{f\in H_k}  |L(1/2+it_1,\sym^2 f)|^2  \dd t_1 \\
    \cdot \int_{|t_2|\leq \ell^{1/2+\varepsilon}}  |L(1/2+it_2,\sym^2 g)|^2  \dd t_2.
\end{multline*}

\begin{lemma}\label{lem:moment-sym2-t}
  Let $g\in H_\ell$. Then   we have
  \[
    \int_{|t|\leq \ell^{1/2+\varepsilon}}  |L(1/2+it,\sym^2 g)|^2  \dd t
    \ll \ell^{5/4+\varepsilon}.
  \]
\end{lemma}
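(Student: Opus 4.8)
The plan is to estimate the second moment by opening $L(1/2+it,\sym^2 g)$ through the approximate functional equation (Lemma \ref{lem:AFE}, with $g$ in place of $f$ and $\ell$ in place of $k$) and reducing to a mean value of a Dirichlet polynomial whose effective length grows with $|t|$, to which the classical mean value theorem for Dirichlet polynomials (Montgomery--Vaughan) applies after a dyadic decomposition in $t$. Write $T=\ell^{1/2+\varepsilon}$. Since $L(s,\sym^2 g)$ is entire, the integrand is continuous, and I would first excise the tiny interval $|t|\le \ell^{-100}$, where the factor $\zeta(1+2it)$ appearing in the weights is large; there I bound directly by the convexity estimate $L(1/2+it,\sym^2 g)\ll (\ell^2(1+|t|))^{1/4+\varepsilon}$, so that this piece contributes $\ll \ell^{-99+\varepsilon}$. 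On the remaining range $\ell^{-100}\le|t|\le T$ one has $\zeta(1+2it)\ll \ell^{\varepsilon}$; using $|a+b|^2\le 2|a|^2+2|b|^2$ and the fact that $L_\infty(1/2-it,\sym^2 g)/L_\infty(1/2+it,\sym^2 g)$ has modulus one, it then suffices to bound
\[
  \int_{\ell^{-100}\le|t|\le T}\Big|\sum_{n\ge1}\frac{\lambda_g(n^2)}{n^{1/2+it}}V_3^+(n;t)\Big|^2\dd t,
\]
the $V_3^-$-contribution being identical. By Lemma \ref{lem:V} with $K\asymp\ell$, the weight satisfies $V_3^+(n;t)\ll\ell^{\varepsilon}$ and is negligible once $n\gg \ell(1+|t|)^{1/2}\ell^{\varepsilon}$, so the inner sum is effectively a Dirichlet polynomial of length $M(t)\asymp \ell(1+|t|)^{1/2}$.

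Next I would decompose $\ell^{-100}\le|t|\le T$ into $O(\log\ell)$ dyadic blocks $|t|\asymp U$, on which the effective length is $M(U)\asymp \ell U^{1/2}$. The key technical step is to separate the $t$-dependence of $V_3^+(n;t)$ from the summation variable. I would insert a smooth dyadic partition of unity in $n$, say $n\asymp N$ with $N\le M(U)\ell^{\varepsilon}$, and take the Mellin transform of $V_3^+(n;t)$ restricted to $n\asymp N$; this expresses each block as a rapidly convergent integral in an auxiliary parameter $\xi$ of genuine Dirichlet polynomials $\sum_{n\asymp N}\lambda_g(n^2)\,n^{-1/2-i(t+\xi)}$ whose coefficients no longer depend on $t$. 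Applying Cauchy--Schwarz in $\xi$ followed by the mean value theorem for Dirichlet polynomials gives
\[
  \int_{|t|\asymp U}\Big|\sum_{n\asymp N}\frac{\lambda_g(n^2)}{n^{1/2+it}}\Big|^2\dd t
  \ll (U+N)\sum_{n\asymp N}\frac{\lambda_g(n^2)^2}{n}\ll (U+N)\,\ell^{\varepsilon},
\]
where I use Deligne's bound $\lambda_g(n^2)\ll n^{\varepsilon}$ (equivalently $|\lambda_g(n^2)|\le d(n^2)$), so that $\sum_{n\asymp N}\lambda_g(n^2)^2/n\ll\ell^{\varepsilon}$ uniformly in $\ell$.

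Summing over the $O(\log\ell)$ dyadic blocks in $N\le M(U)$ yields, for each $U$,
\[
  \int_{|t|\asymp U}\Big|\sum_{n\ge1}\frac{\lambda_g(n^2)}{n^{1/2+it}}V_3^+(n;t)\Big|^2\dd t
  \ll \big(U+M(U)\big)\ell^{\varepsilon}\ll \big(U+\ell U^{1/2}\big)\ell^{\varepsilon}.
\]
Since $\sum_{U\le T}(U+\ell U^{1/2})$ over dyadic $U$ is dominated by its largest term at $U=T=\ell^{1/2+\varepsilon}$, I obtain
\[
  \int_{|t|\le T}|L(1/2+it,\sym^2 g)|^2\dd t\ll \big(T+\ell T^{1/2}\big)\ell^{\varepsilon}\ll \ell^{5/4+\varepsilon},
\]
as claimed.

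The main obstacle is the $t$-dependence of the cutoff $V_3^+(n;t)$: its effective length $\ell(1+|t|)^{1/2}$ varies across the integration range, which blocks a direct application of the mean value theorem with fixed coefficients. The dyadic-in-$t$ decomposition combined with the Mellin separation of variables resolves this at the cost of harmless powers of $\log\ell$. A secondary point, which pins down the exponent, is that it is the length $M(U)\asymp \ell U^{1/2}$ rather than the interval length $U$ that dominates the mean value bound; the resulting term $\ell T^{1/2}=\ell^{5/4}$ (coming from the off-diagonal excess $\sum_{n\le M}\lambda_g(n^2)^2\asymp M$ in Montgomery--Vaughan, concentrated near $|t|\asymp T$) is exactly the claimed size and is genuinely larger than the diagonal contribution $\asymp T\log\ell$.
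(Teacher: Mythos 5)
Your proposal is correct and follows essentially the same route as the paper: the paper's proof likewise notes that the analytic conductor of $L(1/2+it,\sym^2 g)$ is $\ell^2(3+|t|)$, applies the approximate functional equation and the mean value theorem for Dirichlet polynomials on dyadic blocks to get $\int_{T<|t|\leq 2T}|L(1/2+it,\sym^2 g)|^2\,\dd t\ll \ell^{1+\varepsilon}T^{1/2}$, and sums over $T\leq\ell^{1/2+\varepsilon}$. Your write-up merely fills in the technical details (Mellin separation of the $t$-dependent cutoff, treatment of small $|t|$, Deligne's bound) that the paper leaves implicit.
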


\begin{proof}
  Note that for $g\in H_\ell$, the analytic conductor of $L(1/2+it,\sym^2 g)$ is $\ell^2 (3+|t|)$. By the approximate functional equation and the mean value estimate of Dirichlet polynomials (see \cite[Theorem 5.3 \& Theorem 9.1]{IwaniecKowalski2004analytic}), we get
  \[
    \int_{T<|t|\leq 2T}  |L(1/2+it,\sym^2 g)|^2  \dd t
    \ll \ell^{1+\varepsilon} T^{1/2},
  \]
  for $T\leq \ell^{1/2+\varepsilon}$. This completes the proof of the lemma. 
\end{proof}

Recall that we have the following well known bound
\[
  \int_{|t|\leq T} |\zeta(1/2+it)|^4 \dd t \ll T^{1+\varepsilon}.
\]
By Theorem \ref{thm:2moment} and Lemma \ref{lem:moment-sym2-t}, we have
\begin{equation}\label{eqn:Se<<}
   \mathcal{S}_e \ll  \frac{K^\varepsilon}{K^3 \ell} K^2 \ell^{1/2+5/4} \ll \ell^{3/4} K^{\varepsilon-1}.
\end{equation}

\subsection{The cusp form contribution}

By \eqref{eqn:Sc}, \eqref{eqn:sym^2-1}, and the Cauchy--Schwarz inequality, we have
\begin{multline*}
  \mathcal{S}_c  \ll
  \frac{K^\varepsilon}{K^3 \ell}
    \sum_{K<k\leq 2K} \sum_{f\in H_k} \sum_{t_\phi\ll \ell^{1/2+\varepsilon}}
   L(1/2,\phi)  L(1/2,\sym^2f\times \phi)      \\
  \cdot
  \sum_{t_{\phi'} \ll \ell^{1/2+\varepsilon}}
   L(1/2,\phi') L(1/2,\sym^2g\times \phi')  \exp\left( - \frac{t_{\phi'}^2}{\ell}\right)\\
   =
  \frac{K^\varepsilon}{K^3 \ell} \sum_{t_\phi\ll \ell^{1/2+\varepsilon}}
   L(1/2,\phi)
   \left( \sum_{K<k\leq 2K} \sum_{f\in H_k}  L(1/2,\sym^2f\times \phi) \right)    \\
  \cdot
  \sum_{t_{\phi'} \ll \ell^{1/2+\varepsilon}}
   L(1/2,\phi') L(1/2,\sym^2g\times \phi')  \exp\left( - \frac{t_{\phi'}^2}{\ell}\right)  .
\end{multline*}

Note that the spectral large sieve of Deshouillers and Iwaniec \cite[Theorem 2]{DI} gives
\[
  \sum_{t_\phi\ll \ell^{1/2+\varepsilon}}  L(1/2,\phi)
   \ll \bigg( \sum_{t_\phi\ll \ell^{1/2+\varepsilon}} 1 \bigg)^{1/2}
    \bigg( \sum_{t_\phi\ll \ell^{1/2+\varepsilon}} L(1/2,\phi)^2 \bigg)^{1/2}
   \ll \ell^{1+\varepsilon}.
\]
Together with Theorems \ref{thm:1moment} and \ref{thm:mixedmoment}, we have
\begin{equation}\label{eqn:Sc<<}
  \mathcal{S}_c
  \ll
  \frac{K^\varepsilon}{K^3 \ell} \sum_{t_\phi\ll \ell^{1/2+\varepsilon}}
   L(1/2,\phi)  K^{2+\varepsilon} \ell^{4/3+\varepsilon}
  \ll  K^{-1+\varepsilon} \ell^{4/3.}
\end{equation}
Combining \eqref{eqn:var2S}, \eqref{eqn:Se<<} and \eqref{eqn:Sc<<}, we complete the proof of Theorem \ref{thm:var}.

\subsection{An application to $L$-functions} \label{subsec:application}

Now we prove Corollary \ref{cor:moment8}. 
It suffices to show that 
\begin{equation}\label{eqn:<f2g2>=1}
  \frac{1}{\vol(\Gamma\backslash\mathbb{H})}\langle |F_k|^2, |G_\ell|^2 \rangle
  = 1 + O(K^{-\varepsilon}),
\end{equation}
for almost all even integer $k\in(K,2K]$ and almost all $f\in H_k$. 
Let 
\[ 
  \mathscr{S}_k := \sum_{f\in H_k}  \Big|\frac{1}{\vol(\Gamma\backslash\mathbb{H})}\langle |F_k|^2, |G_\ell|^2 \rangle-1 \Big|^2.
\]
For $\ell\leq K^{3/4-\varepsilon}$, Theorem \ref{thm:var} gives
$
   \sum_{K<k\leq2K} \mathscr{S}_k \ll K^{2-\varepsilon/3}. 
$
Hence  we have
\[
   \sum_{\substack{K<k\leq2K \\ \mathscr{S}_k\geq K^{1-\varepsilon/6}}} 1 
   \ll \sum_{\substack{K<k\leq2K  }} \mathscr{S}_k  / K^{1-\varepsilon/6}
   \ll K^{1-\varepsilon/6}. 
\]
So for all but $O(K^{1-\varepsilon/6})$ even integer $k\in [K,2K]$, we have $\mathscr{S}_k\leq K^{1-\varepsilon/6}$. 
For those $k$, we have 
\[
  \sum_{\substack{f\in H_k \\ |\frac{1}{\vol(\Gamma\backslash\mathbb{H})}\langle |F_k|^2, |G_\ell|^2 \rangle-1| \geq K^{-\varepsilon/24}}} 1
  \leq \sum_{\substack{f\in H_k}}  
  \Big|\frac{1}{\vol(\Gamma\backslash\mathbb{H})}\langle |F_k|^2, |G_\ell|^2 \rangle-1 \Big|^2 /K^{-\varepsilon/12} \ll K^{1-\varepsilon/12}.
\]
Hence for all but $O(K^{1-\varepsilon/12})$ forms $f\in H_k$, we have 
$|\frac{1}{\vol(\Gamma\backslash\mathbb{H})} \langle |F_k|^2, |G_\ell|^2 \rangle-1| \leq K^{-\varepsilon/24}$. This proves \eqref{eqn:<f2g2>=1}, 
and hence Corollary \ref{cor:moment8} by using \eqref{eqn:<f2g2>=L-values} and \eqref{eqn:sym^2-1}.

\section{Moments of $L$-functions}\label{sec:moment}

In this section, we prove Theorems \ref{thm:1moment}, \ref{thm:2moment}, and \ref{thm:mixedmoment}.

\subsection{A first moment of the Rankin-Selberg $L$-functions}
In this subsection, we will follow Luo--Sarnak's method in \cite[\S5]{LuoSarnak2004} to prove Theorem \ref{thm:1moment}.
Let $W\in C^\infty(\mathbb{R})$ such that $\supp W\subset [1/2,3]$ and $W^{(j)}(x)\ll_j 1$. 
It suffices to prove that for $t_\phi\leq K^{1/2-\varepsilon}$, we have  
\begin{equation}\label{eqn:M1<<K}
   \mathcal{M}_1 := \sum_{k\equiv 0 \; \mathrm{mod}\; 2} W\left(\frac{k-1}{K}\right)
    \frac{2\pi^2}{k-1}\sum_{f\in H_k} \frac{ L(1/2,\sym^2 f\times\phi)}{L(1,\sym^2 f)}
    \ll K^{1+\varepsilon}.
\end{equation}

\subsubsection{Applying the approximate functional equation}
By Lemma \ref{lem:AFE},  we have 
\[
   \mathcal{M}_1 = 2 \sum_{k\equiv 0 \; \mathrm{mod}\; 2} W\left(\frac{k-1}{K}\right)
    \frac{2\pi^2}{k-1}\sum_{f\in H_k} \frac{1}{L(1,\sym^2 f)}
    \sum_{m,n\geq1} \frac{A(m,n)\lambda_\phi(n)}{(m^2n)^{1/2}} V_6(m^2n).
\]
By Lemma \ref{lem:V} and a smooth partition of unity,  we arrive at  
\begin{multline}\label{eqn:M1}
   \mathcal{M}_1 = 2 \sum_{k\equiv 0 \; \mathrm{mod}\; 2} W\left(\frac{k-1}{K}\right)
    \frac{2\pi^2}{k-1}\sum_{f\in H_k} \frac{1}{L(1,\sym^2 f)} 
    \\ \cdot
    \sum_{m^2n \leq K^{2+\varepsilon}t_\phi} \frac{A(m,n)\lambda_\phi(n)}{(m^2n)^{1/2}} V_6(m^2n)
    + O(K^{-2025})
    \\ \ll  K^\varepsilon  \sup_{s=\varepsilon+i\tau, \ \tau\in[-K^\varepsilon,K^\varepsilon]} 
    \sup_{1\leq N \leq K^{2+\varepsilon}t_\phi}
   | \mathcal{M}_1(s,N) | + 1,
\end{multline}
where 
\[
  \mathcal{M}_1(s,N) := \sum_{k\equiv 0 \; \mathrm{mod}\; 2} W_s\left(\frac{k-1}{K}\right)
    \frac{2\pi^2}{k-1}\sum_{f\in H_k} \frac{1}{L(1,\sym^2 f)}
    \sum_{m^2n \geq1 } \frac{A(m,n)\lambda_\phi(n)}{(m^2n)^{1/2}} V_s\left(\frac{m^2n}{N}\right).
\]
Here $W_s(x)=W(x) \sum_{j=0}^{J}  P_j(s,t_\phi)  K^{2s-j}x^{2s-j}$ satisfying that 
\begin{equation}\label{eqn:WVs}
  \supp W_s\subset [1/2,3] \quad \textrm{ and } \quad  
  W_s^{(j)}(x) \ll_j K^{(j+2)\varepsilon}.
\end{equation}
We have similar properties for $V_s$.  
By \eqref{eqn:A(m,n)} and rearranging the sums, we get 
\begin{multline*}
  \mathcal{M}_1(s,N) = 
    \sum_{m^2n \geq 1} \frac{\lambda_\phi(n)}{(m^2n)^{1/2 }} V_s\left(\frac{m^2n}{N}\right) 
    \sum_{d\mid (m,n)} \mu(d) \\ \cdot
    \sum_{k\equiv 0 \; \mathrm{mod}\; 2} W_s\left(\frac{k-1}{K}\right)
    \frac{2\pi^2}{k-1}\sum_{f\in H_k} \frac{1}{L(1,\sym^2 f)}  A(m/d,1) A(1,n/d).
\end{multline*}
Making  changes of variables, we get  
\begin{multline*}
  \mathcal{M}_1(s,N) = 
    \sum_{d^3 m^2n \geq 1} \frac{\lambda_\phi(dn)}{(d^3m^2n)^{1/2 }}  \mu(d) V_s\left(\frac{d^3m^2n}{N}\right) 
    \sum_{m_1^2 m_2 =m} \sum_{n_1^2 n_2 =n} 
    \\ \cdot
    \sum_{k\equiv 0 \; \mathrm{mod}\; 2} W_s\left(\frac{k-1}{K}\right)
    \frac{2\pi^2}{k-1}\sum_{f\in H_k} \frac{\lambda_f(m_2^2) \lambda_f(n_2^2)}{L(1,\sym^2 f)} 
    \\
    = 
    \sum_{d^3 m_1^4 m_2^2 n_1^2 n_2 \geq 1} 
    \frac{\lambda_\phi(dn_1^2 n_2)}{(d^3m_1^4 m_2^2 n_1^2 n_2)^{1/2 }}  \mu(d)  
    V_s\left(\frac{d^3 m_1^4 m_2^2 n_1^2 n_2}{N}\right) 
    \\ \cdot
    \sum_{k\equiv 0 \; \mathrm{mod}\; 2} W_s\left(\frac{k-1}{K}\right)
    \frac{2\pi^2}{k-1}\sum_{f\in H_k} \frac{\lambda_f(m_2^2) \lambda_f(n_2^2)}{L(1,\sym^2 f)} .
\end{multline*}

\subsubsection{Applying the Petersson trace formula}
By Lemma \ref{lem:PTF}, we get 
\begin{equation}\label{eqn:M1=M10+M11}
  \mathcal{M}_1(s,N) 
    = 
    \mathcal{M}_{10}(s,N) + \mathcal{M}_{11}(s,N) ,
\end{equation}
where the diagonal contribution is 
\[
  \mathcal{M}_{10}(s,N)  := \sum_{d^3 m_1^4 m_2^2 n_1^2 n_2 \geq 1} 
    \frac{\lambda_\phi(dn_1^2 n_2)  \delta_{m_2,n_2} }{(d^3m_1^4 m_2^2 n_1^2 n_2)^{1/2 }}  \mu(d)  
    V_s\left(\frac{d^3 m_1^4 m_2^2 n_1^2 n_2}{N}\right) 
    \sum_{k\equiv 0 \; \mathrm{mod}\; 2} W_s\left(\frac{k-1}{K}\right) ,
\]
and the terms involving the $J$-Bessel function is
\begin{multline*}
  \mathcal{M}_{11}(s,N) 
    := 
    \sum_{d^3 m_1^4 m_2^2 n_1^2 n_2 \geq 1} 
    \frac{\lambda_\phi(dn_1^2 n_2)}{(d^3m_1^4 m_2^2 n_1^2 n_2)^{1/2+s}}  \mu(d) 
    V_s\left(\frac{d^3 m_1^4 m_2^2 n_1^2 n_2}{N}\right) 
    \\ \cdot
    \sum_{k\equiv 0 \; \mathrm{mod}\; 2} W_s\left(\frac{k-1}{K}\right)  2\pi i^{-k} \sum_{c=1}^{\infty}
                 \frac{S(m_2^2,n_2^2;c)}{c}J_{k-1}\left(\frac{4\pi m_2n_2}{c}\right) .
\end{multline*}

\subsubsection{The diagonal contribution}
We first deal with $\mathcal{M}_{10}(s,N)$. We have 
\[
  \mathcal{M}_{10}(s,N)  = \sum_{d^3 m_1^4   n_1^2 n_2^3  \geq 1} 
    \frac{\lambda_\phi(dn_1^2 n_2)}{(d^3m_1^4   n_1^2 n_2^3)^{1/2}}  \mu(d) 
    V_s\left(\frac{d^3 m_1^4  n_1^2 n_2^3}{N}\right)  
    \sum_{k\equiv 0 \; \mathrm{mod}\; 2} W_s\left(\frac{k-1}{K}\right) .
\]
By the Mellin inversion formula, we get 
\[
  \mathcal{M}_{10}(s,N)  = 
    \sum_{k\equiv 0 \; \mathrm{mod}\; 2} W_s\left(\frac{k-1}{K}\right) 
    \frac{1}{2\pi i} \int_{(2)} \sum_{d^3 m_1^4   n_1^2 n_2^3  \geq 1} 
    \frac{\lambda_\phi(dn_1^2 n_2) \mu(d) }{(d^3m_1^4   n_1^2 n_2^3)^{1/2+w}} 
    \tilde{V_s}(w) N^w \dd w  .
\]
  Note that 
  \[
    \sum_{d^3 m_1^4   n_1^2 n_2^3  \geq 1} 
    \frac{\lambda_\phi(dn_1^2 n_2) \mu(d) }{(d^3m_1^4   n_1^2 n_2^3)^{1/2+w}} 
    = \sum_{n \geq 1} \Big(  \sum_{d^3 m_1^4   n_1^2 n_2^3 =n} \lambda_\phi(dn_1^2 n_2) \mu(d) \Big) n^{-1/2-w}.
  \]
  Writing $dn_2=m$, we get 
  \[ 
    \sum_{d^3 m_1^4   n_1^2 n_2^3 =n} \lambda_\phi(dn_1^2 n_2) \mu(d)
    = \sum_{m^3 m_1^4   n_1^2 =n} \lambda_\phi(m n_1^2  ) \sum_{d\mid m}\mu(d)
    = \sum_{ m_1^4   n_1^2  =n} \lambda_\phi( n_1^2  ).
  \]
  For $\Re(w)>2$ we have 
  \[
    \sum_{d^3 m_1^4   n_1^2 n_2^3  \geq 1} 
    \frac{\lambda_\phi(dn_1^2 n_2) \mu(d) }{(d^3m_1^4   n_1^2 n_2^3)^{1/2+w}} 
    = L(1+2w,\sym^2\phi).
  \]
Hence we get 
\begin{equation}\label{eqn:M10<<}
  \mathcal{M}_{10}(s,N)  = 
    \sum_{k\equiv 0 \; \mathrm{mod}\; 2} W_s\left(\frac{k-1}{K}\right) 
    \frac{1}{2\pi i} \int_{(\varepsilon)} L(1+2w,\sym^2\phi)
    \tilde{V_s}(w) N^w \dd w \ll K^{1+\varepsilon} .
\end{equation}

\subsubsection{The Bessel function contribution}
Now we treat $\mathcal{M}_{11}(s,N)$. Rearranging the order of the sums, we get 
\begin{multline*}
  \mathcal{M}_{11}(s,N) 
    = \pi
    \sum_{d^3 m_1^4 m_2^2 n_1^2 n_2 \geq 1} 
    \frac{\lambda_\phi(dn_1^2 n_2)}{(d^3m_1^4 m_2^2 n_1^2 n_2)^{1/2+s}}  \mu(d) 
    V_s\left(\frac{d^3 m_1^4 m_2^2 n_1^2 n_2}{N}\right) 
    \\ \cdot \sum_{c=1}^{\infty}  \frac{S(m_2^2,n_2^2;c)}{c}
    \sum_{k\equiv 0 \; \mathrm{mod}\; 2} W_s\left(\frac{k-1}{K}\right)  
    2 i^{-k} J_{k-1}\left(\frac{4\pi m_2n_2}{c}\right) .
\end{multline*}
By Lemma \ref{lem:J}, we have 
\begin{equation}\label{eqn:M11<M111+M112}
  \mathcal{M}_{11}(s,N) 
    \ll |\mathcal{M}_{111}(s,N)|+|\mathcal{M}_{112}(s,N)|  + \mathcal{M}_{113}(N) ,
\end{equation}
where 
\begin{multline*}
  \mathcal{M}_{111}(s,N) 
    :=  
    \sum_{d^3 m_1^4 m_2^2 n_1^2 n_2 \geq 1} 
    \frac{\lambda_\phi(dn_1^2 n_2)}{(d^3m_1^4 m_2^2 n_1^2 n_2)^{1/2+s}}  \mu(d) 
    V_s\left(\frac{d^3 m_1^4 m_2^2 n_1^2 n_2}{N}\right) 
    \\ \cdot 
    \sum_{c=1}^{\infty}  \frac{S(m_2^2,n_2^2;c)}{c^{1/2}}
     \frac{K}{(m_2n_2)^{1/2}}   e\left(\frac{2 m_2n_2}{c}\right)  
     \breve{W}_s\left(\frac{K^2 c}{8\pi m_2n_2}\right)   , 
\end{multline*}
\begin{multline*}
  \mathcal{M}_{112}(s,N) 
    :=  
    \sum_{d^3 m_1^4 m_2^2 n_1^2 n_2 \geq 1} 
    \frac{\lambda_\phi(dn_1^2 n_2)}{(d^3m_1^4 m_2^2 n_1^2 n_2)^{1/2+s}}  \mu(d) 
    V_s\left(\frac{d^3 m_1^4 m_2^2 n_1^2 n_2}{N}\right) 
    \\ \cdot 
    \sum_{c=1}^{\infty}  \frac{S(m_2^2,n_2^2;c)}{c^{1/2}}
     \frac{K}{(m_2n_2)^{1/2}}   e\left(-\frac{2 m_2n_2}{c}\right) 
     \overline{\breve{W}_s\left(\frac{K^2 c}{8\pi m_2n_2}\right) }  , 
\end{multline*}
and 
\begin{equation*}
  \mathcal{M}_{113}(N) 
    :=  K^\varepsilon
    \sum_{d^3 m_1^4 m_2^2 n_1^2 n_2 \asymp N} 
    \frac{|\lambda_\phi(dn_1^2 n_2)|}{(d^3m_1^4 m_2^2 n_1^2 n_2)^{1/2}}    
    \sum_{c=1}^{\infty}  \frac{|S(m_2^2,n_2^2;c)|}{c} 
     \frac{1}{K^4} \frac{m_2n_2}{c} .
\end{equation*}

We first deal with $\mathcal{M}_{113}(N) $. By Weil's bound on the Kloosterman sums, we have 
\begin{equation*}
  \mathcal{M}_{113}(N) 
    \ll   K^{-4+\varepsilon}
    \sum_{d^3 m_1^4 m_2^2 n_1^2 n_2 \asymp N} 
    \frac{|\lambda_\phi(dn_1^2 n_2)|}{(d^3m_1^4  n_1^2 )^{1/2}}   n_2^{1/2}
    \sum_{c=1}^{\infty}  \frac{ (m_2^2,n_2^2,c)^{1/2}  }{c^{3/2-\varepsilon}} .
\end{equation*}
Using the bound $(m_2^2,n_2^2,c)\leq m_2^2$ and Kim--Sarnak's bounds $|\lambda_\phi(n)|\ll n^{7/64+\varepsilon}$, we get 
\begin{multline}\label{eqn:M113<<}
  \mathcal{M}_{113}(N) 
    \ll   K^{-4+\varepsilon}
    \sum_{d^3 m_1^4 m_2^2 n_1^2 n_2 \asymp N} 
    \frac{ (dn_1^2 n_2)^{7/64+\varepsilon}}{(d^3m_1^4  n_1^2 )^{1/2}}  m_2 n_2^{1/2}
    \sum_{c=1}^{\infty}  \frac{ 1}{c^{3/2-\varepsilon}}
    \\ 
    \ll N^{2-\varepsilon}  K^{-4+\varepsilon} \ll t_\phi^2
    \ll K ,
\end{multline}
provided by $N \leq K^{2+\varepsilon}t_\phi$ and $t_\phi\leq K^{1/2}$.

We now treat $\mathcal{M}_{111}(s,N) $. 
The estimate for $\mathcal{M}_{112}(s,N) $ will be the same. 
We have 
\begin{multline*}
  \mathcal{M}_{111}(s,N) 
    =  N^{-s} K^{-1} \sum_{c=1}^{\infty}  \frac{1}{c^{3/2}} 
    \sum_{d^3 m_1^4 m_2^2 n_1^2 \geq 1}  \mu(d)   \frac{1}{(d^3m_1^4 m_2 n_1^2 )^{1/2}}
    \\ \cdot 
    \sum_{n_2\geq1} \lambda_\phi(dn_1^2 n_2) 
    S(m_2^2,n_2^2;c)  e\left(\frac{2 m_2n_2}{c}\right)  
      \mathcal{V}\left(\frac{d^3 m_1^4 m_2^2 n_1^2 n_2}{N}\right) 
     \mathcal{W}\left(\frac{K^2 c}{ m_2n_2}\right)   , 
\end{multline*}
where $\mathcal{V}(y)=y^{-s}V_s(y)$ and $W_1(y)= y \breve{W}_s(y/8\pi)$. 
Note that  by \eqref{eqn:WVs} we have 
\begin{equation}\label{eqn:cV}
  \supp \mathcal{V}\subset [1/2,3] \quad \textrm{ and } \quad 
    \mathcal{V}^{(j)}(y) \ll_j  K^{j\varepsilon }. 
\end{equation}
By the definition of $\breve{W}_s$ and repeated integration by parts, we know 
\[
  \breve{W}_s^{(j)}(y) \ll_{j,A} \left(\frac{K^\varepsilon}{1+|y|}\right)^{A}, 
  \quad \textrm{for any $A\geq0$.}
\]
 Hence we have 
\begin{equation}\label{eqn:cW}
  \mathcal{W}^{(j)}(y) \ll_{j,A} K^\varepsilon \left(\frac{K^\varepsilon}{1+|y|}\right)^{A}, 
  \quad \textrm{for any $A\geq0$.}
\end{equation}
These show that $m_2n_2\leq 3N$ and the contribution from $K^2c/(m_2n_2)\leq K^{2\varepsilon}$ is negligibly small. So we can truncate the $c$-sum at $c\ll N K^{\varepsilon-2}$. 

By the Hecke relations, we get $\lambda_\phi(dn_1^2 n_2) =\sum_{a\mid (dn_1^2,n_2)}\mu(a)\lambda_\phi(dn_1^2/a) \lambda_\phi(n_2/a)$. Writing $n_2=an$, we obtain
\begin{multline*}
  \mathcal{M}_{111}(s,N) 
    =  N^{-s} K^{-1} \sum_{c\ll N K^{\varepsilon-2}}  \frac{1}{c^{3/2}} 
    \sum_{d^3 m_1^4 m_2^2 n_1^2 \geq 1}  \mu(d)   \frac{1}{(d^3m_1^4 m_2 n_1^2 )^{1/2}}
    \sum_{a\mid  dn_1^2}\mu(a)\lambda_\phi(dn_1^2/a) 
    \\ \cdot 
    \sum_{n\geq1} \lambda_\phi(n) 
    S(m_2^2,a^2 n^2;c)  e\left(\frac{2 m_2an}{c}\right)  
      \mathcal{V}\left(\frac{d^3 m_1^4 m_2^2 n_1^2 an}{N}\right) 
     \mathcal{W}\left(\frac{K^2 c}{ m_2an}\right)  + O(K^{-B}) , 
\end{multline*}
Breaking the $n$-sum into arithmetic progressions modulo $c$, we get 
\begin{multline*}
  \mathcal{M}_{111}(s,N) 
    =  N^{-s} K^{-1} \sum_{c\ll N K^{\varepsilon-2}}  \frac{1}{c^{3/2}} 
    \sum_{d^3 m_1^4 m_2^2 n_1^2 \geq 1}  \mu(d)   \frac{1}{(d^3m_1^4 m_2 n_1^2 )^{1/2}}
    \\ \cdot 
    \sum_{a\mid  dn_1^2}\mu(a)\lambda_\phi(dn_1^2/a) 
    \sum_{\alpha\mod c} S(m_2^2,a^2 \alpha^2;c)    
    e\left(\frac{2 m_2a \alpha}{c}\right)  
    \\ \cdot 
    \sum_{\substack{n\geq1 \\ n\equiv \alpha \mod c}} \lambda_\phi(n) 
      \mathcal{V}\left(\frac{d^3 m_1^4 m_2^2 n_1^2 an}{N}\right) 
     \mathcal{W}\left(\frac{K^2 c}{ m_2an}\right)  + O(K^{-B})   , 
\end{multline*}
By using the additive characters modulo $c$, we know that the innermost $n$-sum above is 
\begin{align*}
   S_1 & = \sum_{\substack{n\geq1 \\ n\equiv \alpha \mod c}} \lambda_\phi(n) 
      \mathcal{V}\left(\frac{d^3 m_1^4 m_2^2 n_1^2 an}{N}\right) 
     \mathcal{W}\left(\frac{K^2 c}{ m_2an}\right)  \\
   & = \frac{1}{c} \sum_{\beta\mod c} e( - \alpha \beta/c) 
   \sum_{ n\geq1  } \lambda_\phi(n) e(n\beta/c)
      \mathcal{V}\left(\frac{d^3 m_1^4 m_2^2 n_1^2 an}{N}\right) 
     \mathcal{W}\left(\frac{K^2 c}{ m_2an}\right) .
\end{align*}
By the partial summation formula we get 
\[
  S_1 \leq \max_{\beta\mod c} \bigg|
  \int_{\frac{N}{4d^3 m_1^4 m_2^2 n_1^2 a}}
  ^{\frac{4N}{d^3 m_1^4 m_2^2 n_1^2 a}} 
  \bigg(\sum_{ n\leq u } \lambda_\phi(n) e(n\beta/c)\bigg)
      \bigg(\mathcal{V}\left(\frac{d^3 m_1^4 m_2^2 n_1^2 a u}{N}\right) 
     \mathcal{W}\left(\frac{K^2 c}{ m_2a u}\right) \bigg)' \dd u
      \bigg|.
\]
By Lemma \ref{eqn:GL2exp-sum} we have
\[
  S_1 \ll 
  \int_{\frac{N}{4d^3 m_1^4 m_2^2 n_1^2 a}}
  ^{\frac{4N}{d^3 m_1^4 m_2^2 n_1^2 a}}  u^{1/2} t_\phi^{1/2+\varepsilon}
      \bigg(\mathcal{V}\left(\frac{d^3 m_1^4 m_2^2 n_1^2 a u}{N}\right) 
     \mathcal{W}\left(\frac{K^2 c}{ m_2a u}\right) \bigg)' \dd u .
\]
By \eqref{eqn:cV} and \eqref{eqn:cW} we get 
\[
  S_1 \ll K^\varepsilon \left(\frac{N}{d^3 m_1^4 m_2^2 n_1^2 a}\right)^{1/2 } t_\phi^{1/2}. 
\]
Hence 
\begin{multline*}
  \mathcal{M}_{111}(s,N) 
    \ll  K^{-1+\varepsilon} \sum_{c\ll N K^{\varepsilon-2}}  \frac{1}{c^{3/2}} 
    \sum_{d^3 m_1^4 m_2^2 n_1^2 \ll N}    \frac{1}{(d^3m_1^4 m_2 n_1^2 )^{1/2}}
    \sum_{a\mid  dn_1^2} \left(\frac{dn_1^2}{a}\right)^{1/2} 
    \\ \cdot 
    \sum_{\alpha\mod c} |S(m_2^2,a^2 \alpha^2;c)| \left(\frac{N}{d^3 m_1^4 m_2^2 n_1^2 a}\right)^{1/2 } t_\phi^{1/2} . 
\end{multline*}
By Weil's bound on the Kloosterman sums, we get 
\begin{align}
  \mathcal{M}_{111}(s,N)  
  & \ll  \frac{  N^{1/2}  t_\phi^{1/2} }{K^{1-\varepsilon}} 
    \sum_{c\ll N K^{\varepsilon-2}}  
    \sum_{  m_2   \ll N^{1/2}}    \frac{1}{m_2^{3/2}}   (m_2^2,c)^{1/2} \nonumber \\
  & \leq   \frac{  N^{1/2}  t_\phi^{1/2} }{K^{1-\varepsilon}} 
    \sum_{  m_2   \ll N^{1/2}}    \frac{1}{m_2^{3/2}} 
    \sum_{c\ll N K^{\varepsilon-2}}    \sum_{d\mid (m_2^2,c)} d^{1/2} \nonumber \\
  & \leq \frac{  N^{1/2}  t_\phi^{1/2} }{K^{1-\varepsilon}} 
    \sum_{  m_2   \ll N^{1/2}}    \frac{1}{m_2^{3/2}} 
       \sum_{d\mid m_2^2} d^{-1/2}  N K^{\varepsilon-2} 
    \ll \frac{ N^{3/2}t_\phi^{1/2}}{ K^{3-\varepsilon}}  \ll K^\varepsilon t_\phi^2. \label{eqn:M111<<}
\end{align}
In the last inequality, we have used the condition $N \leq K^{2+\varepsilon}t_\phi$. 

By \eqref{eqn:M1}, \eqref{eqn:M1=M10+M11}, \eqref{eqn:M10<<}, \eqref{eqn:M11<M111+M112}, 
\eqref{eqn:M113<<}, \eqref{eqn:M111<<}, we prove \eqref{eqn:M1<<K}. Hence we complete the proof of Theorem \ref{thm:1moment}. 

\subsection{A second moment of the symmetric square $L$-functions}
In this subsection, we prove Theorem \ref{thm:2moment}. \emph{Cf.} Khan \cite{Khan}. 
Let $W\in C^\infty(\mathbb{R})$ such that $\supp W\subset [1/2,3]$ and $W^{(j)}(x)\ll_j 1$. 
It suffices to prove that for $-K^{1/2-\varepsilon}\leq t\leq K^{1/2-\varepsilon}$, we have  
\[
   \mathcal{M}_2 := \sum_{k\geq12} W\left(\frac{k-1}{K}\right)
    \sum_{f\in H_k} \frac{| L(1/2+it,\sym^2 f)|^2 }{L(1,\sym^2 f)}
    \ll K^{2+\varepsilon},
\]
for any $\varepsilon>0$. 
  

\subsubsection{Applying the approximate functional equation}
By Lemmas \ref{lem:AFE} and \ref{lem:V}, and a smooth partition of unity, we get 
\[
  \mathcal{M}_2 \ll   \sup_{N\leq K^{1+\varepsilon}\sqrt{T}}
   \sum_{k\geq12} W\left(\frac{k-1}{K}\right) \sum_{f\in H_k} \frac{1 }{L(1,\sym^2 f)} 
   \left| \sum_{n\geq1} \frac{\lambda_f(n^2)}{n^{1/2+it}} V_3^+\left(n;t\right)  V_1\left(\frac{n}{N}\right) \right|^2 + 1,
\]
where $V_1(\xi)\in \mathcal{C}_c^\infty(\mathbb{R})$ with $\supp V_1\subseteq [1,2]$, $V_1^{(j)}(\xi)\ll_j 1$, for any $j\in \mathbb{Z}_{\geq0}$.  
Here we write $T=1+|t|$. 
Hence by Lemma \ref{lem:V} again and Stirling's formula, we obtain
\begin{multline*}
  \mathcal{M}_2 \ll K^{\varepsilon} \sup_{N\leq K^{1+\varepsilon}\sqrt{T}}
   \sideset{}{^{\rm even}} \sum_{k\geq12} W\left(\frac{k-1}{K}\right) \sum_{f\in H_k} \frac{1 }{L(1,\sym^2 f)}  
   \\
   \cdot \int_{\varepsilon-i K^{\varepsilon}}^{\varepsilon+i K^{\varepsilon}}  \left| \sum_{n\geq1} \frac{\lambda_f(n^2)}{n^{1/2+it}} \left(\frac{n}{N}\right)^{-s}  V_1\left(\frac{n}{N}\right) \right|^2 \dd s + 1.
\end{multline*}
Hence we have 
\begin{equation}\label{eqn:M2<<}
  \mathcal{M}_2 \ll K^{1+\varepsilon} \sup_{N\leq K^{1+\varepsilon}\sqrt{T}}
  \mathcal{M}_2(N) + 1,
\end{equation}
where 
\[
  \mathcal{M}_2(N):=
  \sideset{}{^{\rm even}} \sum_{k\geq12} W\left(\frac{k-1}{K}\right)
    \frac{12\zeta(2)}{(k-1)}\sum_{f\in H_k} \frac{1 }{L(1,\sym^2 f)} 
    \bigg| \sum_{n\geq1} \frac{\lambda_f(n^2)}{n^{1/2+it}} V\left(\frac{n}{N}\right) \bigg|^2,
\]
for certain $V(\xi)\in \mathcal{C}_c^\infty(\mathbb{R})$ with $\supp V\subseteq [1,2]$, $V^{(j)}(\xi)\ll_j K^{j\varepsilon}$, for any $j\in \mathbb{Z}_{\geq0}$.  
Opening the square and rearranging the sums, we have 
\begin{multline*}
  \mathcal{M}_2(N)= \sum_{m\geq1} \frac{1}{m^{1/2+it}} V\left(\frac{m}{N}\right) \sum_{n\geq1} \frac{1}{n^{1/2-it}} \overline{V\left(\frac{n}{N}\right)} \\
  \cdot
  \sideset{}{^{\rm even}}\sum_{k\geq12} W\left(\frac{k-1}{K}\right)
    \frac{12\zeta(2)}{(k-1)}\sum_{f\in H_k} \frac{\lambda_f(m^2)\lambda_f(n^2)}{L(1,\sym^2 f)}.
\end{multline*}

\subsubsection{Applying the Petersson trace formula}
By Lemma \ref{lem:PTF}, the second line of the above equation is equal to 
\[
  \sideset{}{^{\rm even}}\sum_{k\geq12} W\left(\frac{k-1}{K}\right)  \delta_{m,n} 
  + \sideset{}{^{\rm even}}\sum_{k\geq12} W\left(\frac{k-1}{K}\right) 2\pi i^{-k} \sum_{c=1}^{\infty}
                 \frac{S(m^2,n^2;c)}{c}J_{k-1}\left(\frac{4\pi mn}{c}\right) .
\]
Hence we have 
\begin{equation}\label{eqn:M2=M20+M21}
  \mathcal{M}_2(N)=\mathcal{M}_{20}(N)+\mathcal{M}_{21}(N),
\end{equation}
where the diagonal contribution  is 
\[
  \mathcal{M}_{20}(N) :=   \sum_{n\geq1} \frac{1}{n} \left|V\left(\frac{n}{N}\right)\right|^2 
  \sideset{}{^{\rm even}}\sum_{k\geq12} W\left(\frac{k-1}{K}\right)  
  \ll K.
\]
and  the terms involving the $J$-Bessel function is 
\begin{multline*}
  \mathcal{M}_{21}(N) := \pi \sum_{m\geq1} \frac{1}{m^{1/2+it}} V\left(\frac{m}{N}\right) \sum_{n\geq1} \frac{1}{n^{1/2-it}} \overline{V\left(\frac{n}{N}\right)}
  \\ \cdot 
  \sum_{c=1}^{\infty}   \frac{S(m^2,n^2;c)}{c}
  \sideset{}{^{\rm even}}\sum_{k\geq12} W\left(\frac{k-1}{K}\right) 2 i^{-k} J_{k-1}\left(\frac{4\pi mn}{c}\right).
\end{multline*}

\subsubsection{The Bessel function contribution} 
By Lemma \ref{lem:J}, we have 
\begin{multline*}
  \mathcal{M}_{21}(N) = \pi \sum_{m\geq1} \frac{1}{m^{1/2+it}} V\left(\frac{m}{N}\right) \sum_{n\geq1} \frac{1}{n^{1/2-it}} \overline{V\left(\frac{n}{N}\right)}  \sum_{c=1}^{\infty}   \frac{S(m^2,n^2;c)}{c}
  \\ \cdot   
  \left(-\frac{K}{\sqrt{x}} \Im\left\{ e(-1/8) e^{ix} \breve{W}(K^2/2x)  \right\}
    + O\left(\frac{x}{K^4} \int_{\mathbb{R}} v^4 |\hat{W}(v)| \dd v \right) \right),
\end{multline*}
where $x=4\pi mn/c$. 
The contribution from the error term is bounded by 
\[
  O\left( \sum_{m\ll N} \frac{1}{m^{1/2}}   \sum_{n\ll N} \frac{1}{n^{1/2}}   \sum_{c=1}^{\infty}   \frac{(m^2,n^2,c)^{1/2} c^{1/2+\varepsilon}}{c} \frac{mn}{cK^4} \right)
  = O\left( \frac{N^2}{K^4}   \sum_{n\ll N}    \sum_{c=1}^{\infty}   \frac{(n^2,c)^{1/2}}{c^{3/2-\varepsilon}}  \right).
\]
Note that $\sum_{n\ll N}    \sum_{c=1}^{\infty}   \frac{(n^2,c)^{1/2}}{c^{3/2-\varepsilon} }
\ll \sum_{n\ll N}  \sum_{d\mid n^2}  \sum_{\substack{c\geq1,d\mid c}}    \frac{d^{1/2}}{c^{3/2-\varepsilon}} \ll N$. 
The above is 
\[
  O(N^3 K^{-4}) = O(T^{3/2}K^{-1+\varepsilon}), 
\]
which is $O(K)$ if $T\leq K$. Hence we get 
\begin{equation}\label{eqn:M2<<M211}
  \mathcal{M}_2(N) \ll \mathcal{M}_{211}(N) + K, 
\end{equation}
where $\mathcal{M}_{211}(N)$ is defined by 
\[
  K \sum_{c=1}^{\infty}   \frac{1}{c^{1/2}} \sum_{m\geq1} \frac{1}{m^{1+it}} V\left(\frac{m}{N}\right) 
  \sum_{n\geq1} \frac{1}{n^{1-it}} \overline{V\left(\frac{n}{N}\right)}  S(m^2,n^2;c)  e\left(\pm \frac{2mn}{c}\right) \breve{W}\left(\frac{K^2c}{8\pi mn}\right) .
\]
Note that by  $\breve{W}(v)\ll (1+|v|)^B$, the contribution from terms with $N^2/c \leq K^{2-\varepsilon}$ is negligibly small. 
Hence we can truncate the $c$-sum at $c\leq N^2 /K^{2-\varepsilon}$, getting
\begin{multline}\label{eqn:M211<<}
  \mathcal{M}_{211}(N)
  = K \sum_{c\leq N^2 /K^{2-\varepsilon}}   \frac{1}{c^{1/2}} \sum_{m\geq1} \frac{1}{m^{1+it}} V\left(\frac{m}{N}\right) 
  \\
  \cdot\sum_{n\geq1} \frac{1}{n^{1-it}} \overline{V\left(\frac{n}{N}\right)}  S(m^2,n^2;c)  e\left(\pm \frac{2mn}{c}\right) \breve{W}\left(\frac{K^2c}{8\pi mn}\right)
  +  O_B(K^{-B}),
\end{multline}
for any $B>0$.

If $|t|\leq K^\varepsilon$, then we have 
\[
  \mathcal{M}_{211}(N) \ll K \sum_{c\ll K^\varepsilon} c \sum_{m\asymp N}\frac{1}{m}\sum_{n\asymp N}\frac{1}{n} 
  \ll K^{1+\varepsilon}. 
\]

If $K^\varepsilon \leq |t| \leq K^{1/2-\varepsilon}$, then we consider the $n$-sum in $\mathcal{M}_{211}(N)$, 
\begin{align*}
  \mathcal{S} & = \sum_{n\geq1}  S(m^2,n^2;c)  e\left(\pm \frac{2mn}{c}\right)  \frac{1}{n^{1-it}} \overline{V\left(\frac{n}{N}\right)} \breve{W}\left(\frac{K^2c}{8\pi mn}\right) \\
  & = \sum_{b\mod c} S(m^2,b^2;c)  e\left(\pm \frac{2mb}{c}\right)  \sum_{\substack{n\geq1\\n\equiv b\mod c}}   \frac{1}{n^{1-it}} \overline{V\left(\frac{n}{N}\right)} \breve{W}\left(\frac{K^2c}{8\pi mn}\right). 
\end{align*}
By the Poisson summation formula we get 
\[
  \mathcal{S} = \sum_{b\mod c} S(m^2,b^2;c)  e\left(\pm \frac{2mb}{c}\right) 
  \frac{1}{c} \sum_{n\in \mathbb{Z}}  e\left(\frac{nb}{c}\right)  
  \mathcal{I}(n),
\]
where 
\[
  \mathcal{I}(n) := \int_{\mathbb{R}} \frac{1}{y^{1-it}} \overline{V\left(\frac{y}{N}\right)} \breve{W}\left(\frac{K^2c}{8\pi my}\right) 
  e\left(-\frac{ny}{c}\right)  \dd y.
\]
By making a change of variable $y=N\xi$, we have 
\[
  \mathcal{I}(n) =  N^{it} \int_{\mathbb{R}} \frac{1}{\xi} \overline{V\left(\xi\right)} \breve{W}\left(\frac{K^2c}{8\pi mN\xi}\right)   e\left(\frac{t}{2\pi}\log \xi - \frac{nN}{c}\xi\right)  \dd \xi.
\]
By repeated integration by parts and the assumption $|t|\geq K^\varepsilon$, we have 
\[\mathcal{I}(0) \ll_B K^{-B},\]
for any $B>0$.
Recall that $N\leq K^{1+\varepsilon}\sqrt{T}$. For $|n|\geq1$ and $c\leq N^2/K^{2-\varepsilon}$, we have 
$|nN/c|\geq N/c\geq K^{2-\varepsilon}/N \geq K^{1-2\varepsilon}/\sqrt{T}$. 
If $T\ll K^{1/2}$, we have $|nN/c|\gg K^\varepsilon T$. 
Therefore $\frac{\dd}{\dd \xi}(\frac{t}{2\pi}\log \xi - \frac{nN}{c}\xi) \gg K^\varepsilon T$.
By repeated integration by parts, we have 
\[\mathcal{I}(n) \ll_B n^{-6} K^{-B},\]
for any $B>0$.
Hence by \eqref{eqn:M211<<} we have $\mathcal{M}_{211}(N) \ll_B K^{-B}$, for any $B>0$.

Combining \eqref{eqn:M2<<}, \eqref{eqn:M2=M20+M21}, and \eqref{eqn:M2<<M211}, we complete the proof of Theorem \ref{thm:2moment}.

\subsection{A mixed moment of $L$-functions}
In this subsection, we prove Theorem \ref{thm:mixedmoment}. 
By the discussion preceding \cite[Eq.~(3.7)]{Huang2024joint}, we know that for $|t|\leq \ell^{2/3}$, 
\[
  \frac{|\Gamma(\ell-1/2+it)|}{\Gamma(\ell)} 
  \asymp \frac{1}{\ell^{1/2}} \exp\left(-\frac{t^2}{2\ell}\right). 
\]
By the standard Rankin--Selberg method and the Watson formula \cite{Watson}, we have 
\begin{multline*}
  \|g\|_4^4 \asymp 1 + \frac{1}{\ell} \;\ \  \sideset{}{^{\rm even}}\sum_{t_\phi\leq \ell^{1/2+\varepsilon}}
  \frac{L(1/2,\phi) L(1/2,\sym^2 g\times \phi)}{L(1,\sym^2 g)^2 L(1,\sym^2 \phi)} \exp\left(-\frac{t_\phi^2}{\ell}\right) \\
  + \frac{1}{\ell} \int_{|t|\leq \ell^{1/2+\varepsilon}}
  \frac{|\zeta(1/2+it)|^2  |L(1/2+it,\sym^2 g)|^2} { L(1,\sym^2 g)^2  |\zeta(1+2it)|^2 } \exp\left(-\frac{t^2}{\ell}\right) \dd t .
\end{multline*}
Hence by Blomer--Khan--Young's $L^4$-norm bound \eqref{eqn:L^4} we have 
\begin{equation*}
   \sideset{}{^{\rm even}} \sum_{t_\phi\leq \ell^{1/2+\varepsilon}}
  \frac{L(1/2,\phi) L(1/2,\sym^2 g\times \phi)}{L(1,\sym^2 g)^2 L(1,\sym^2 \phi)} \exp\left(-\frac{t_\phi^2}{\ell}\right) \\
  \ll  \ell \|g\|_4^4  \ll \ell^{4/3+\varepsilon} .
\end{equation*}
By  \eqref{eqn:sym^2-1}, we complete the proof of Theorem \ref{thm:mixedmoment}.

\section*{Acknowledgements}
 
The author wants to thank Professors Jianya Liu and Ze\'ev Rudnick for encouragements.
He is grateful to the referee for his/her very helpful comments and suggestions.


\end{document}